 \documentclass[final,1p,times,number]{elsarticle}

\usepackage{amssymb}
 \usepackage{booktabs} 
 \usepackage{graphicx}
\usepackage{subcaption}

\usepackage{amsmath}
\usepackage{xcolor}

\usepackage{amsthm}
\usepackage{graphicx}
\usepackage{booktabs}      
\usepackage{subcaption}    

\theoremstyle{plain}
\newtheorem{thm}{Theorem}[section]
\newtheorem{lem}[thm]{Lemma}

\journal{Linear Algebra and its Applications}

\begin{document}


\begin{frontmatter}
\title{Computing the action of a matrix exponential on an interval via the $\star$-product approach}

\author[label1]{Stefano Pozza\corref{cor1}}
\author[label1]{Shazma Zahid\fnref{fn1}}

\cortext[cor1]{pozza@karlin.mff.cuni.cz}
\fntext[fn1]{ shazma.zahid@matfyz.cuni.cz}

\affiliation[label1]{organization={Faculty of Mathematics and Physics, Charles University},%
            addressline={Sokolovská 83}, 
            city={Prague}, 
            postcode={186 75}, 
            country={Czech Republic}}

\begin{abstract}
We present a new method for computing the action of the matrix exponential on a vector, \( e^{At}v \). 
The proposed approach efficiently evaluates the solution for all \( t \) within a prescribed bounded interval 
by expanding it into an orthogonal polynomial series. 
This method is derived from a new representation of the matrix exponential in the so-called \(\star\)-algebra, 
an algebra of bivariate distributions. 
The resulting formulation leads to a linear system equivalent to a matrix equation of Stein type, which can be solved by either direct or Krylov subspace methods. 
Numerical experiments demonstrate the accuracy and efficiency of the proposed approach in comparison to state-of-the-art techniques.
\end{abstract}

\begin{keyword}
$\star$-product \sep Legendre polynomials \sep matrix exponential \sep autonomous linear ODEs

\MSC[2020] 15A16 \sep 65F60
\end{keyword}

\end{frontmatter}
\section{Introduction} 
\label{sec1}
This paper introduces a new numerical method for evaluating the action of the matrix exponential, \(e^{tA}v\), for every $t$ in a given finite interval, where \(A\) is a matrix and \(v\) a vector. The approach 
exploits the fact that the solution of a linear autonomous system of ordinary differential equations (ODEs) can be expressed by a matrix exponential. In turn, the new numerical method is obtained by exploiting the new formula for the solution of such an ODE by the $\star$-product \cite{GiscardPozza,RycBouGis25} (a generalization of the Volterra convolution) followed by an appropriate discretization of the resulting expression \cite{PozzaNiel,pozza2023newk}.
The computation of matrix exponentials is a fundamental task in many areas, including quantum dynamics, control theory, network science, and the time integration of large-scale linear ODEs arising from spatial discretizations of PDEs~(see, e.g., \cite{Hochbruck2010,Ostermann2022,higham2008functions,GolMer}).  
 
 Among the most widely used approaches are the ones based on Krylov subspace methods (e.g., \cite{Moler2003}). The \texttt{expv} routine in \texttt{Expokit} \cite{Sidje} employs the Arnoldi or Lanczos process to construct a Krylov basis and approximates \(e^{tA}v\) via projection, requiring only sparse matrix–vector products and avoiding explicit construction of \(e^{tA}\). 
In \cite{al2011computing}, \texttt{expmv\_tspan} returns either a single matrix $e^{tA}B$ or a sequence of matrices $e^{t_{k}A}B$ evaluated on an equally spaced grid of points $t_k$. In the latter case, it uses the scaling phase of the scaling-and-squaring method combined with a truncated Taylor series approximation. The scaling factor and Taylor series degree are determined using the sharp truncation error bounds from \cite{al2009sharp}, which are expressed in terms of $\|A^k\|^{1/k}$ for selected values of $k$, with the norms estimated via a matrix norm estimator.

The approach proposed in this work is based on the recently developed theory of \(\star\)-algebra \cite{Pozzxa,StefanoShazma,RycBouGis25}. By discretizing the \(\star\)-product via Legendre polynomial expansions, it reformulates the problem of matrix approximation into a linear algebra problem, which can be equivalently expressed in terms of a Stein-type matrix equation. 
The \(\star\)-approach gives the solution in terms of the coefficient of the truncated Legendre polynomial expansion of \(e^{tA}v\) in a given interval \(\mathcal{I}\). Therefore, the solution can be computed for every $t \in \mathcal{I}$. This paper also provides a bound for the truncation error of the method. Combined with an Arnoldi-based Krylov projection of dimension \(k\), the method yields an efficient algorithm for large sparse matrices, allowing the solution to be evaluated at arbitrary times without recomputation.

Numerical experiments show that the proposed method can achieve accuracy comparable to that of state-of-the-art approaches. In particular, the Arnoldi-accelerated variant delivers competitive runtimes across a broad class of test problems, making it a promising alternative for large-scale exponential integrators, especially when solutions are required at many or a priori unknown time points.

The remainder of the paper is organized as follows. Section~\ref{sec2} presents the mathematical formulation of the $\star$-method, deriving the truncated Legendre expansion and the corresponding Stein equation. In Section~\ref{sec:trunc:err}, we derive an upper bound for the truncation error. Section~\ref{sec3} describes the numerical implementation of the $\star$-method, including its integration with a Krylov subspace approach, and reports detailed numerical experiments comparing it with \texttt{expv} and \texttt{expmv\_tspan}.
 Section~\ref{sec4} concludes with a discussion of potential future research.
\section{Mathematical Formulation} \label{sec2}
Consider the system of linear autonomous ODEs of the form
\begin{equation}\label{eq:matrix_ode_full}
\frac{d}{dt} \tilde{U}_s(t) = \tilde{A} \, \tilde{U}_s(t), \quad \tilde{U}_s(s) = I_N, \quad t, s \in \mathcal{I} = [-1, 1], \ t \ge s,
\end{equation}
where \(\tilde{A} \in \mathbb{C}^{N \times N}\) is a matrix and \(\tilde{U}_s(t)\) denotes the matrix-valued solution parameterized by the initial time \(s\). Note that we set the interval $\mathcal{I}=[-1,1]$ for later convenience, but this can easily be generalized to any other bounded interval. 
Introducing the notation \(\tilde{U}_s(t) := U(t,s)\), for $t\geq s$ the ODE can be expressed equally in the \(\star\)-product framework \cite{pozza2023newk,GiscardPozza,Pozzxa} as
\begin{equation}\label{eq:star_ode_full}
\frac{d}{dt} U(t,s) = A(t,s) \, U(t,s), \quad U(s,s) = I_N,
\end{equation}
where the bivariate matrix operator \(A(t,s)\) is defined by
\begin{equation} \label{heaviside}
A(t,s) := \tilde{A} \, \Theta(t-s), \quad 
\Theta(t-s) = \begin{cases} 1, & t \ge s \\ 0, & t < s \end{cases},
\end{equation}
where $\Theta(t-s)$ is known as the \emph{Heaviside step function}.
Trivially, for \(t \ge s\) we get \(\Theta(t-s) = 1\) and the solution of Eq.~\eqref{eq:star_ode_full} is the matrix exponential.
\begin{equation*}
U(t,s) = \exp\left((t-s) \tilde{A}\right).
\end{equation*}
Following \cite{pozza2023newk}, we introduce a spectral approximation framework to numerically solve Eq.~\eqref{eq:star_ode_full}.  
Let $\{p_k(t)\}_{k=0}^{M-1}$ denote the Legendre polynomials rescaled and shifted so that they are orthonormal on the interval of interest $\mathcal I=[-1,1]$, i.e.,
\[
\int_{-1}^1 p_k(\tau)p_\ell(\tau)\,d\tau=\delta_{k\ell},
\]
with $\delta_{k\ell}$ the Kronecker delta.
Since these polynomials form the basis for the analytic functions in $\mathcal I$, for a given positive integer $M$, we can approximate each entry of the solution $U(t,s)$ by a truncated double expansion.
\begin{equation}\label{matrixrep}
 U_{ij}(t,s) \approx \sum_{k=0}^{M-1} \sum_{\ell=0}^{M-1} u_{k,\ell}^{(i,j)} p_k(t) p_\ell(s),
\end{equation}
where the Legendre–Fourier coefficients are given by
\begin{equation*}\label{eq:coefficients_final}
u^{(i,j)}_{k,\ell}
   =
   \int_{\mathcal I} \int_{\mathcal I}
        U_{ij}(\tau,\rho)\,
        p_k(\tau)p_\ell(\rho)\, d\rho\, d\tau  \in \mathbb{C}.
\end{equation*}
 For every pair \((i,j)\), these coefficients are collected into the $M \times M$ matrix (block) 
\[ U^M_{i,j} :=
   \left[u_{k,\ell}^{(i,j)}\right]_{k,\ell=0}^{M-1}
   \in \mathbb C^{M\times M}.\]
Hence, defining the vector
\[
\phi_M(t):=
\begin{bmatrix}
p_0(t)\\[0.4ex]
p_2(t)\\[0.4ex]
\vdots\\[0.4ex]
p_{M-1}(t)
\end{bmatrix},
\]
the Legendre expansion of the solution element $U_{i,j}(t,s)$ can be rewritten in the matrix form
\[
 U_{ij}(t,s) \approx \sum_{k=0}^{M-1} \sum_{\ell=0}^{M-1} u_{k,\ell}^{(i,j)} p_k(t) p_\ell(s),
 = \phi_M(t)^{T} {U}_{i,j}^M \, \phi_M(s),\; t,s \in \mathcal I=[-1,1].
\]
As a consequence, we can approximate the solution of Eq.~\eqref{eq:star_ode_full} using the Kronecker product $\otimes$ by the formula
\begin{equation}\label{eq:U_Kronecker}
  U(t,s) \approx \big(I_N \otimes \phi_M^T(t)\big)\,
   \mathcal U_M\,
   \big(I_N \otimes \phi_M(s)\big),    
\end{equation}
where $I_N$ is the identity matrix of size $N$, and \(\mathcal{U}_M\) is named the \emph{coefficient matrix} of \(U(t,s)\) and is defined as the block matrix

\[
\mathcal U_M :=
\begin{bmatrix}
U^M_{1,1} & \cdots & U^M_{1,N}\\
\vdots & \ddots & \vdots\\
U^M_{N,1} & \cdots & U^M_{N,N}
\end{bmatrix}
\in \mathbb C^{NM\times NM}.
\]

Let \(T_M:=[t_{k,\ell}]_{k,\ell=0}^{M-1}\) be the \emph{coefficient matrix} of $\Theta(t-s)$, with
\begin{equation} \label{eq:coeff_heavidise}
t_{k,\ell} := \int_{-1}^1 \Theta(\tau-\rho) p_k(\tau)p_\ell(\tau)\,d\tau, \quad k=0,\dots, M-2, \quad t_{M-1,\ell} = 0,
\end{equation}
for $\ell = 0, \dots, M-1$.
and let \(\mathcal A_M := \tilde{A} \otimes T_M\). 
Note that, as suggested in \cite{pozza2023newk}, we set the last row of $T_M$ equal to zero, to reduce the errors coming from the truncation of the expansion of $\Theta(t-s)$. 
Then by the $\star$-approach outlined, e.g., in \cite{Pozzxa, pozza2023newk}, we get the approximation
\begin{equation}\label{eq:U_coeff_approx}
\mathcal U_M
   \approx
   (I_N \otimes T_M)\,
   (I_{NM} - \mathcal A_M)^{-1}.
\end{equation}
Substituting the approximation \eqref{eq:U_coeff_approx} into approximation \eqref{eq:U_Kronecker} yields
\begin{equation}\label{eq:U_Kron_T}
U(t,s)
   \approx
   \big(I_N \otimes \phi_M^T(t) T_M\big)\,
   (I_{NM} - \mathcal A_M)^{-1}\,
   \big(I_N \otimes \phi_M(s)\big).
\end{equation}
Now, given a nonzero vector $v$, consider the initial value problem 
\begin{equation}\label{eq:ivp_refined}
\frac{d}{dt} \tilde{u}(t) = \frac{1}{2} A \tilde{u}(t), \quad \tilde{u}(-1) = v, \quad t \in [-1,1],
\end{equation}
with an exact solution $\tilde{u}(t) = \exp\!\big(\frac{1}{2} A (t+1)\big) v$. Note that $\tilde{u}(t) = \exp\!\big(\frac{1}{2} A (t+1)\big) v  = \exp\!\big( A \, \hat{t}\big) v$, for $t \in [-1,1], \, \hat{t} \in [0,1]$.

Using Eq.~\eqref{eq:U_Kron_T}, the solution of Eq.\eqref{eq:ivp_refined} is approximated by
\begin{equation*}\label{eq:expAv_refined}
\exp\left(\frac{1}{2}A(t+1)\right)v \approx \left(I_N \otimes \phi_M(t)^T T_M \right) 
\left(I_{NM} - \frac{1}{2} \mathcal{A}_M\right)^{-1} 
\left(v \otimes \phi_M(s)\right).
\end{equation*}
Hence, the solution is obtained by solving the linear system:
\begin{equation}\label{eq:linsyst}
\left(I_{NM} - \frac{1}{2} \mathcal{A}_M\right)x = v \otimes \phi_M(s),
\end{equation}
or equivalently, the Stein matrix equation:
\begin{equation}\label{eq:Stein_refined}
X - \frac{1}{2} T_M X \tilde{A}^{T}
   = \phi_M(s) v^{T}, 
   \qquad X \in \mathbb{C}^{M \times N}, 
   \qquad x = \mathrm{vec}(X);
\end{equation}
see, e.g., \cite{simoncini2016computational}.
Here, $\operatorname{vec}$ denotes the vectorization of a matrix, i.e., the transformation that converts a matrix into a column vector by stacking the columns of the matrix, one after the other. 

Thanks to the $\star$-approach combined with the use of the Legendre polynomial expansion, the action of a matrix exponential on a vector (i.e., the solution of the system of ordinary differential equations in Eq.~\eqref{eq:ivp_refined}) is expressed through a linear system, or equivalently, through a matrix equation.
Note that once $X$ is computed, the solution is given by the formula
\[ \exp\left(\frac{1}{2}A(t+1)\right)v \approx \left(\phi_M(t)^T T_M X\right)^T =  X^T T_M^T \phi_M(t), \]
thanks to the properties of the Kronecker product and algebraic manipulation.

\section{Truncation error}\label{sec:trunc:err}
This section studies the truncation error of the $\star$-method, that is, we aim to bound the truncation error norm as follows:
\[ \left\| \exp(A(t+1)) - (I \otimes \phi(t)^T)(I-A \otimes T_{M})^{-1}(I \otimes e_1 \sqrt{2})  \right\| \leq C_M \left(\nu_M\right)^M,  \]
with $C_M>0$ constant (or slowly increasing) and $\nu_M<1$. The actual bound is given in Theorem~\ref{thm:bound} below, which is the objective of this section.

  Let $A$ be an $N \times N$ matrix and let $\Theta(t-s)$ and $\delta(t-s)$ denote, respectively, the Heaviside step function and the Dirac delta distribution (hereafter we will often skip the argument \((t-s)\)). 
To derive the bound, we rely on the Fréchet–Lie group structure on distributions described in \cite{RycBouGis25}, induced by the following non‑commutative product (known as the $\star$‑product).
For matrices \(F(t,s), G(t,s)\) of compatible size whose entries are suitable bivariate distributions, their $\star$‑product is defined as
\begin{equation}\label{eq:starproddef}
    F(t,s) \star G(t,s) := \int_{\mathcal{I}} F(t,\tau)\, G(\tau,s) \,\mathrm{d}\tau.
\end{equation}
In our setting, \(F\) and \(G\) are either of the form \(\tilde{H}(t,s)\Theta(t-s)\), where \(\tilde{H}(t,s)\) is analytic in both variables over \(\mathcal{I}\), or Dirac delta distributions of the form  
\(
  C\,\delta(t-s),
\)
with \(C\) a constant scalar or matrix.

We summarize below some properties of the $\star$‑product that will be used later on.  
First, the identity element is the Dirac delta, \(F \star \delta = \delta \star F = F,\) and, under suitable assumptions on \(F(t,s)\), there exists the $\star$‑inverse \(F^{-\star}\) satisfying
\(F \star F^{-\star} = \delta\),  \cite{GiscardPozza,RycBouGis25}.
For constant matrices or scalars \(C, D\), it holds:
\[
  (C \cdot F(t,s)) \star (G(t,s) \cdot D) = C\cdot (F \star G)(t,s) \cdot D;
\]
 a trivial consequence of the fact that $C$ and $D$ can be taken outside the integral in \eqref{eq:starproddef}.
Furthermore, defining the $\star$‑powers \(F^{\star k}\) as the $\star$‑product of \(F\) with itself for $k$‑times, it holds
\begin{equation}\label{eq:star:powers}
  (A \Theta(t-s))^{\star k}
    = A^k\, \Theta(t-s)^{\star k},
  \quad
  \Theta(t-s)^{\star (k+1)}
    = \frac{(t-s)^{k}}{k!}\, \Theta(t-s),
  \quad k = 0,1,\dots,
\end{equation}
see, e.g., \cite{GisPoz23}.

Using these properties, it has already been proved (\cite{giscard2015exact, GisPoz23}) that the matrix exponential can be expanded into the convergent $\star$-algebra Neumann series:
\begin{align*}
 \exp(A(t-s)) &=  \sum_{k=0}^\infty (A\Theta)^{\star k} \star \Theta = (\delta I- A\Theta)^{-\star} \star \Theta. 
\end{align*}
The following Theorem generalizes this result to the Taylor series.
\begin{thm}\label{thm:expansion}
Any matrix exponential can be expanded into a $\star$-algebra Taylor series centered at $c \in \mathbb{C}$, with $|c| < |c-1|$, that is,
\begin{align*}
 \exp(A(t-s)) &= (\delta I- A\Theta)^{-\star} \star \Theta  = \sum_{k=0}^{\infty} \frac{ \left( A \Theta - c \delta \right)^{\star k}}{(1-c)^{k+1}} \star \Theta. 
\end{align*}
Moreover, the related truncated series remainder is
\begin{align*}
 \exp(A(t-s)) - \sum_{k=0}^{\ell} \frac{ \left( A \Theta - c \delta \right)^{\star k}}{(1-c)^{k+1}} \star \Theta = \frac{1}{1-c} \sum_{k=\ell +1}^\infty \left(\frac{-c}{1-c}\right)^k L_k\left(\frac{A(t-s)}{c}\right), 
\end{align*}
where $L_k(z)$ is the Laguerre polynomial of $k$-degree. 
\end{thm}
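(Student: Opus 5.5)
Since $\delta$ is the $\star$-identity and commutes with everything, I will work formally in the commutative subalgebra generated by $\delta$ and $A\Theta$, treating $\delta I - A\Theta$ as a ``scalar'' $1 - x$ with $x = A\Theta$. The plan is to verify the expansion term by term, compute each term explicitly, and only then handle convergence.

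\textbf{Algebraic identity.} First I write
\[
\delta I - A\Theta = (1-c)\delta I - (A\Theta - c\delta).
\]
The formal geometric series of $\bigl((1-c)\delta I - (A\Theta - c\delta)\bigr)^{-\star}$ is
\[
\sum_{k\ge0} \frac{(A\Theta - c\delta)^{\star k}}{(1-c)^{k+1}} ,
\]
and $\star$-multiplying on the right by $\Theta$ gives the claimed series. To make this rigorous rather than formal, I will compute each term in closed form.

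\textbf{Closed form of each term.} Expanding by the binomial theorem (valid because $\delta$ commutes with $A\Theta$) and using \eqref{eq:star:powers}, I get
\[
(A\Theta - c\delta)^{\star k}\star\Theta=\sum_{j=0}^k\binom{k}{j}(-c)^{k-j}A^j\Theta^{\star(j+1)} .
\]
This equals
\[
\Theta(t-s)\sum_{j=0}^k\binom{k}{j}(-c)^{k-j}\frac{(A(t-s))^j}{j!}.
\]
Factoring out $(-c)^k$ yields
\[
(-c)^k\sum_{j}\binom{k}{j}\frac{(-1)^j}{j!}\Bigl(\frac{A(t-s)}{c}\Bigr)^j=(-c)^kL_k\!\Bigl(\frac{A(t-s)}{c}\Bigr),
\]
using $L_k(z)=\sum_j\binom{k}{j}(-1)^jz^j/j!$. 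Hence the $k$-th term of the series is
\[
\frac{1}{1-c}\Bigl(\frac{-c}{1-c}\Bigr)^kL_k\!\Bigl(\frac{A(t-s)}{c}\Bigr),
\]
which immediately gives the stated form of the remainder once the full series is known to equal $\exp(A(t-s))$ (for $t\ge s$).

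\textbf{Summation (the main step).} It remains to show that
\[
\frac{1}{1-c}\sum_{k=0}^\infty w^kL_k(z)=e^{cz}
\]
with $w=-c/(1-c)$ and $z=A(t-s)/c$. For this I use the Laguerre generating function
\[
\sum_k L_k(x)w^k=\frac{1}{1-w}\exp\!\Bigl(-\frac{xw}{1-w}\Bigr),\qquad |w|<1 .
\]
Here $1-w=1/(1-c)$ and $w/(1-w)=-c$, so the sum equals $(1-c)e^{cx}$. With $x=A(t-s)/c$ this gives $(1-c)\exp(A(t-s))$, as required. The hypothesis $|c|<|c-1|$ is exactly $|w|<1$.

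\textbf{Main obstacle.} The hardest part is justifying convergence for a matrix argument. I would reduce to scalars via the holomorphic functional calculus, or via the Jordan form of $A$, using the fact that the generating function converges locally uniformly in $x$ for fixed $|w|<1$. Both sides are entire in $x$, and derivatives in $x$ (which handle Jordan blocks) converge as well. This covers all $t,s\in\mathcal I$ with $t\ge s$, where $\Theta=1$; for $t<s$ both sides vanish.
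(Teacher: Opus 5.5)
Your proof is correct and follows the paper's argument essentially step for step. Like the paper, you expand $(A\Theta - c\delta)^{\star k}\star\Theta$ binomially using that $\delta$ commutes with $A\Theta$, identify each term as $(-c)^k L_k\bigl(A(t-s)/c\bigr)\Theta$, and sum with the Laguerre generating function at $y=-c/(1-c)$. Your extra paragraph justifying the generating-function identity for a matrix argument (locally uniform convergence in $x$ combined with the functional calculus or the Jordan form) fills in a step the paper takes for granted. Like the paper, you tacitly need $c\neq 0$; the case $c=0$ is just the Neumann series.
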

\begin{proof}
Since $\delta$ is the identity of the $\star$-product, it holds that $(A\Theta)^{\star j} \star (-c\delta)^{\star i} = (-c\delta)^{\star i} \star (A\Theta)^{\star j}$, i.e., the factors commute. Therefore,
\begin{align*}
    \left( A \Theta - c \delta \right)^{\star k} \star \Theta &= \sum_{j=0}^k \binom{k}{j} (A\Theta)^{\star j} \star (-c\delta)^{\star k-j} \star \Theta
    = \sum_{j=0}^k \binom{k}{j} A^j \Theta^{\star j+1} (-c)^{k-j} \\
    &= (-c)^k \sum_{j=0}^k \binom{k}{j} \left(\frac{A(t-s)}{-c}\right)^j\frac{1}{j!}\Theta
    = (-c)^k L_k\left(\frac{A(t-s)}{c}\right) \Theta;
\end{align*}
See, e.g., \cite[Formula~(5.1.6)]{szego}.
Therefore, 
\begin{align*}
    \sum_{k=0}^{\infty} \frac{ \left( A \Theta - c \delta \right)^{\star k}}{(1-c)^{k+1}} \star \Theta = \frac{1}{1-c} \sum_{k=0}^\infty \left(\frac{-c}{1-c}\right)^k L_k\left(\frac{A(t-s)}{c}\right)\Theta.
\end{align*}
Since the generating function of the Laguerre polynomials \cite[Formula~(5.1.9)]{szego} is
\[
\sum_{k=0}^{\infty} L_k(z)\, y^k
= \frac{1}{1-y}\exp\!\left(-\,\frac{z\,y}{1-y}\right),
\qquad |y|<1.
\]
 Setting $y= -c/(1-c)$ and $z = A (t-s)/c$ we obtain
\begin{align}\label{eq:laguerre:series}
    \frac{1}{1-c} \sum_{k=0}^\infty \left(\frac{-c}{1-c}\right)^k L_k\left(\frac{A(t-s)}{c}\right)
    & = \exp\left(A(t-s) \right),
\end{align}
proving the theorem.
\end{proof}
As shown in \cite{Pozzxa}, for $M \rightarrow \infty$, Eq.~\eqref{eq:U_Kron_T} leads to the following exact expression
\begin{equation*}
\exp(A(t+1)) = (I \otimes \phi(t)^T) (I - A \otimes T_\infty)^{-1} (I \otimes T_\infty \phi(-1)), \quad t \in [-1, 1].
\end{equation*}
To alleviate the notation, the sizes of the vectors \(\phi(t)\) and \(e_1\) are not made explicit and will be clear from the context; here they are infinite size vectors. Note that by \eqref{eq:coeff_heavidise}, 
\[ \phi(t)^T T_\infty \,\phi(-1) = \Theta(t+1) \equiv 1, \quad t \in [-1,1].
\]
Therefore,  $T_\infty \, \phi(-1)$ is the infinite vector that contains the coefficients of the Legendre expansion of the constant function $1$, i.e.,
\begin{equation}\label{eq:e1:phi}
    T_\infty \, \phi(-1) = e_1 \sqrt{2}.
\end{equation}
Let us define the truncated series
\begin{equation}\label{eq:pl:star}
 p_\ell^\star(A)(t,s) := \sum_{k=0}^{\ell} \frac{ \left( A \Theta(t-s) - c \delta(t-s) \right)^{\star k}}{(1-c)^{k+1}}. 
 \end{equation}
Note that $p_\ell^\star(\cdot)$ is a polynomial of degree $\ell$ in the $\star$-algebra, named a $\star$-polynomial. Mapped into the ring of coefficient matrices \cite{Pozzxa}, the $\star$-polynomial becomes
\begin{equation}\label{eq:pl:mtx}
 p_\ell(A \otimes T_\infty) := \sum_{k=0}^{\ell} \frac{ \left( A \otimes T_\infty - c I \right)^{k}}{(1-c)^{k+1}},
\end{equation}
satisfying (\cite{Pozzxa})
\[p_\ell^\star(A)(t,-1) = 
 (I \otimes \phi(t)^T) p_\ell(A \otimes T_\infty)(I \otimes \phi(-1)), \quad t \in[-1,1].\]
Hence, we obtain the error expression:
\begin{align*}
    \exp(A(t\hspace{-2pt}+\hspace{-2pt}1)) -  [p_\ell^\star(A)\star \Theta](t,-1) = (I \otimes \phi(t)^T)\left[ (I - A \otimes T_\infty)^{-1} - p_\ell(A \otimes T_\infty) \right] (I \otimes e_1 \hspace{-2pt} \sqrt{2}).
\end{align*}

The following Lemma presents a bound for this error norm. 
\begin{lem}\label{lem:bound:star} Let $A$ be a diagonalizable matrix with spectral radius $\rho(A)$ and eigenvector basis $Z$, and let $p_\ell^{\star}$ be defined as in \eqref{eq:pl:star}. Then, for 
 \[|c| < |1-c|, \quad \ell > 4\rho(A)\cdot(t+1) \frac{1}{|c|}\left(\ln\left(\frac{|1-c|}{|c|}\right)\right)^{-2} - 1, \quad \gamma_\ell :=\exp\left(2\sqrt{\frac{|\lambda|(t+1)}{(\ell+1)|c|}}\right), \]
the following bound holds
\[
      \| \exp(A(t\hspace{-2pt}+\hspace{-2pt}1)) -  [p_\ell^\star(A)\star \Theta](t,-1) \| \leq \kappa(Z)  \frac{1}{|1-c|-|c|\gamma_\ell}  \left|\frac{c \gamma_\ell}{1-c}\right|^{\ell+1}, \]
where $\kappa(Z) = \| Z \| \, \| Z^{-1}\|$ is the condition number of the eigenvector basis.
\end{lem}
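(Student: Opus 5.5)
By Theorem~\ref{thm:expansion} with $s=-1$, the error equals the Laguerre tail
\[
E_\ell(t) := \frac{1}{1-c}\sum_{k=\ell+1}^\infty \Big(\frac{-c}{1-c}\Big)^k L_k\Big(\frac{A(t+1)}{c}\Big).
\]
The plan is to reduce this matrix series to scalar series via the eigendecomposition. We then bound each Laguerre polynomial by an exponential-type estimate and sum the resulting geometric tail.

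\textbf{Step 1: diagonalization.} Write $A = Z\Lambda Z^{-1}$. Every $L_k$ is a polynomial, so $L_k(A(t+1)/c) = Z\,L_k(\Lambda(t+1)/c)\,Z^{-1}$, and hence
\[
\|E_\ell(t)\| \le \kappa(Z)\max_{\lambda}\Big|\frac{1}{1-c}\sum_{k\ge \ell+1}\Big(\frac{-c}{1-c}\Big)^k L_k\Big(\frac{\lambda(t+1)}{c}\Big)\Big|.
\]
Here the maximum runs over the eigenvalues $\lambda$ of $A$. It is therefore enough to bound the scalar tail for a fixed $z=\lambda(t+1)/c$ with $|z|\le \rho(A)(t+1)/|c|$.

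\textbf{Step 2: scalar Laguerre bound.} I would use the classical estimate for complex argument, $|L_k(z)|\le e^{2\sqrt{k|z|}}$. It follows from $|L_k(z)|\le \sum_j \binom{k}{j}|z|^j/j!$ together with $\binom{k}{j}/j!\le k^j/(j!)^2$ and $\sum_j (\sqrt{k|z|})^{2j}/(j!)^2 = I_0(2\sqrt{k|z|})\le e^{2\sqrt{k|z|}}$. Writing $e^{2\sqrt{k|z|}} = \exp\!\big(2\sqrt{|z|/k}\big)^k$, the $k$-th tail term is bounded by $\big(|c|/|1-c|\big)^k\exp\!\big(2\sqrt{|z|/k}\big)^k$. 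For $k\ge \ell+1$ we have $\exp(2\sqrt{|z|/k})\le \exp(2\sqrt{|z|/(\ell+1)})=\gamma_\ell$, with $\gamma_\ell$ exactly as in the statement (taking $\lambda$ of largest modulus, or keeping $\lambda$ as in the statement). The tail is then dominated by
\[
\frac{1}{|1-c|}\sum_{k\ge\ell+1} q^k, \qquad q:=\frac{|c|\gamma_\ell}{|1-c|},
\]
which equals $\frac{1}{|1-c|}\frac{q^{\ell+1}}{1-q} = \frac{1}{|1-c|-|c|\gamma_\ell}\,q^{\ell+1}$. This is the claimed bound.

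\textbf{Step 3: convergence condition.} The main obstacle is ensuring $q<1$, i.e.\ $\gamma_\ell<|1-c|/|c|$. Taking logarithms, this reads $2\sqrt{|\lambda|(t+1)/((\ell+1)|c|)}<\ln(|1-c|/|c|)$. Squaring gives $\ell+1>4|\lambda|(t+1)|c|^{-1}(\ln(|1-c|/|c|))^{-2}$, which is implied by the hypothesis on $\ell$ because $|\lambda|\le\rho(A)$. The hypothesis $|c|<|1-c|$ makes the logarithm positive, so the condition is meaningful. The delicate point is the Laguerre bound of Step 2: it must hold for complex arguments and be sharp enough to yield exactly $\gamma_\ell$. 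If the Bessel-type estimate needs adjusting (for example $k$ versus $k+1$ in the exponent), I would rely on the monotonicity in $k$ to absorb the difference while keeping the stated form.
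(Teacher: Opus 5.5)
Your proposal is correct and follows the paper's proof essentially step for step: you diagonalize to extract $\kappa(Z)$, bound $|L_k(z)|\le \exp(2\sqrt{k|z|})$ via $\binom{k}{j}/j!\le k^j/(j!)^2$, use monotonicity in $k$ to replace the per-term factor by $\gamma_\ell$, and sum the geometric tail under the logarithmic condition on $\ell$. Your remark that $\gamma_\ell$ should be evaluated at $|\lambda|=\rho(A)$ to make the bound uniform over the eigenvalues is a worthwhile clarification that the paper leaves implicit; the hedge about $k$ versus $k+1$ is unnecessary, since the Laguerre estimate holds exactly as you stated it.
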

\begin{proof}
    Consider the eigendecomposition $A = Z \Lambda Z^{-1}$, with $\Lambda$ the diagonal matrix containing the eigenvalues $\lambda_1, \dots, \lambda_N$. Then
\begin{align*}
   \left \| \exp(A(t\hspace{-2pt}+\hspace{-2pt}1)) -  [p_\ell^\star(A)\star \Theta](t,-1) \right\| &\leq 
    \left\| Z \exp(\Lambda(t\hspace{-2pt}+\hspace{-2pt}1))Z^{-1} -  Z[p_\ell^\star(\Lambda)\star \Theta](t,-1) Z^{-1} \right\| \\
    &\leq  \kappa(Z)
    \left\| \exp(\Lambda(t\hspace{-2pt}+\hspace{-2pt}1))-[p_\ell^\star(\Lambda)\star \Theta](t,-1) \right\|, \\
    &\leq  \kappa(Z) \max_{\lambda \in \{\lambda_1, \dots, \lambda_N\}}
    \left| \exp(\lambda(t\hspace{-2pt}+\hspace{-2pt}1))-[p_\ell^\star(\lambda)\star \Theta](t,-1) \right| \\
    & =  \kappa(Z) \max_{\lambda \in \{\lambda_1, \dots, \lambda_N\}} E_\ell(\lambda).
\end{align*} 
Using Eq.~\eqref{eq:laguerre:series}, we get 
\begin{align*}
    E_\ell(\lambda) 
    &= \left| \frac{1}{1-c} \sum_{k={\ell+1}}^\infty \left(\frac{-c}{1-c}\right)^k L_k\left(\frac{\lambda(t+1)}{c}\right) \right| \leq  \frac{1}{|1-c|} \sum_{k={\ell+1}}^\infty \left| \left(\frac{-c}{1-c}\right)^k \right| \left|L_k\left(\frac{\lambda(t+1)}{c}\right) \right| \\
    &\leq  \frac{1}{|1-c|} \sum_{k={\ell+1}}^\infty \left| \left(\frac{-c}{1-c}\right)^k \right| \left| \exp\left(2 \sqrt{k\frac{|\lambda|(t+1)}{|c|}}\right) \right| \leq  \frac{1}{|1-c|} \sum_{k={\ell+1}}^\infty \left| \left( \frac{-c}{1-c} \exp\left(2\sqrt{\frac{|\lambda|(t+1)}{k|c|}}\right) \right)^k\right|,
\end{align*}
where we used the fact that
\begin{align*}
    \left|L_k(z)\right| \leq \left|\sum_{j=0}^k \binom{k}{j} \frac{(-z)^j}{j!}\right| \leq 
    \sum_{j=0}^k \frac{k^j}{(j!)^2} |z|^j = \sum_{j=0}^k \left( \frac{\left(\sqrt{k|z|}\right)^j}{j!} \right)^2 \leq \exp\left(2\sqrt{k|z|}\right).
\end{align*}
If \(|c|/|1-c| < 1\), then for
\[\ell+1 > 4|\lambda|(t+1) \left(\ln\left(\frac{|1-c|}{|c|}\right)\right)^{-2}\frac{1}{|c|},\] 
and
\[ \gamma_\ell :=\exp\left(2\sqrt{\frac{|\lambda|(t+1)}{(\ell+1)|c|}}\right) < \frac{|1-c|}{|c|}. \]
we get the following inequalities
\begin{align*}
    E_\ell(\lambda) \leq \frac{1}{|1-c|} \sum_{k={\ell+1}}^\infty \left| \left(\frac{-c}{1-c}  \gamma_\ell \right)^k\right| \leq \frac{1}{|1-c|}  \left| \left(\frac{-c \gamma_\ell}{1-c}  \right)^{\ell+1}\right| \sum_{k=0}^\infty \left| \frac{-c \gamma_\ell}{1-c} \right|^k = \left|\frac{-c \gamma_\ell}{1-c}  \right|^{\ell+1} \frac{1}{|1-c|-|c\gamma_\ell|}.
\end{align*}
\end{proof}
Note that for $\ell + 1 \geq 4 \rho(A) (t+1) / |c|$ and $e |c| < |1-c|$ we get the simpler bound:
\[
     \left \| \exp(A(t\hspace{-2pt}+\hspace{-2pt}1)) -  [p_\ell^\star(A)\star \Theta](t,-1) \right \| \leq \kappa(Z)  \frac{1}{|1-c|-|c|e}  \left|\frac{c e}{1-c}\right|^{\ell+1}, \]
which holds, in particular, for all \(1/(1-e)<c<0\).

Let us define the infinite matrix
\begin{equation}\label{eq:T_hatm}
[\hat{T}_{M}]_{k,\ell} := \left \{ \begin{array}{ll}
  [T_{\infty}]_{k,\ell}, \quad k=1, & \dots,M-1, \quad  \ell = 1, \dots, M \\
   0,  & \text{ otherwise } 
\end{array} \right. ,   
\end{equation}
 that is, the matrix obtained by setting all the elements of $T_\infty$ to zero after the $M-1$ first row and the $M$-th column. Note that \(\hat{T}_M\) is the infinite counterpart of the $M \times M$ matrix \(T_M\) defined in \eqref{eq:coeff_heavidise}. 
Since $T_\infty$ is a tridiagonal matrix, the vector $T_\infty^k\, e_1$ is zero after the $k+1$-st element. Moreover, we have the following Lemma.
\begin{lem}\label{lem:tridiag}
Let $T_\infty$ and $\hat{T}_{M}$ be the matrices defined above. Then
\begin{equation}\label{eq:lem:matching}
     T_\infty^k\, e_1 =  \hat{T}_{M}^k\, e_1, \quad k = 0, \dots, M-1.
\end{equation}
Moreover,
\begin{equation}\label{eq:lem:norm:bound}
   \left\| T_\infty^k\, e_1 \sqrt{2} \right\| \leq \frac{28}{k!} \left( \frac{9}{4} \right)^k, \quad k=0, 1, 2, \dots \,.
\end{equation}
\end{lem}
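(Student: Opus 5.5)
The plan is to treat the two claims separately. The matching identity \eqref{eq:lem:matching} follows from the band structure of $T_\infty$ by induction. The norm bound \eqref{eq:lem:norm:bound} follows by reading $T_\infty^k e_1\sqrt{2}$ as the coefficient vector of an explicit polynomial and using Parseval's identity.

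For \eqref{eq:lem:matching}, I will first recall the explicit entries of $T_\infty$. For the orthonormal Legendre polynomials $p_n=\sqrt{(2n+1)/2}\,P_n$, the classical identity $\int_{-1}^t P_n = (P_{n+1}-P_{n-1})/(2n+1)$ shows three things: $T_\infty$ is tridiagonal, its only nonzero diagonal entry is the $(0,0)$ one, and its off-diagonal entries are $\pm 1/\sqrt{(2n+1)(2n+3)}$. I then argue by induction on $j$ that $T_\infty^j e_1$ is supported in its first $j+1$ entries and coincides with $\hat T_M^j e_1$.

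The inductive step is the following. To form $T_\infty(T_\infty^j e_1)$ one needs only the columns $1,\dots,j+1$ of $T_\infty$, and these produce nonzero entries only in rows $1,\dots,j+2$. For $j+1\le M-1$, all these rows and columns lie in the block retained in $\hat T_M$ by \eqref{eq:T_hatm}, so $T_\infty^{j+1}e_1=\hat T_M^{j+1}e_1$.

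The main obstacle I expect is the bookkeeping at the top index $k=M-1$. Here one must check carefully against the convention in \eqref{eq:coeff_heavidise} and \eqref{eq:T_hatm} (0- versus 1-based indices, and the zeroed last row) that the row reached at the final step is still one that $\hat T_M$ keeps. If the convention cuts exactly that row, I would restate the range as $k\le M-2$, or note that the retained block is exactly the $M$ rows reached. Either way the induction itself is unchanged.

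For \eqref{eq:lem:norm:bound}, I use the mapping between the $\star$-algebra and coefficient matrices \cite{Pozzxa} that the paper already relies on. The coefficient matrix of $\Theta^{\star(k+1)}$ is $T_\infty^{k+1}$. Evaluating at $s=-1$ and using \eqref{eq:e1:phi} gives
\[
\phi(t)^T T_\infty^{k+1}\phi(-1)=\phi(t)^T T_\infty^{k}\, e_1\sqrt{2} .
\]
By \eqref{eq:star:powers}, the left-hand side equals $\Theta^{\star(k+1)}(t,-1)=(t+1)^k/k!$. Hence $T_\infty^k e_1\sqrt{2}$ is the vector of orthonormal Legendre coefficients of $(t+1)^k/k!$. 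Parseval's identity then yields
\[
\left\|T_\infty^k e_1\sqrt{2}\right\|=\frac{1}{k!}\left(\int_{-1}^1 (t+1)^{2k}\,dt\right)^{1/2}=\frac{2^k}{k!}\sqrt{\frac{2}{2k+1}}\le \frac{\sqrt{2}\,2^k}{k!}.
\]
Finally, $\sqrt2\,2^k\le 28\,(9/4)^k$ holds trivially for all $k\ge 0$, which gives \eqref{eq:lem:norm:bound}.
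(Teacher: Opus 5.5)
Your proof of \eqref{eq:lem:norm:bound} is correct, and for the bound itself it takes a different and better route than the paper. Both arguments identify $T_\infty^k e_1\sqrt2$ as the orthonormal Legendre coefficient vector of $(t+1)^k/k!$ in the same way. The paper then bounds each coefficient separately with the Bernstein-ellipse estimate of \cite{wang} (with $\rho=5/4$) and sums the weighted squares; this is where $(9/4)^k$ and the constant $28$ come from. You instead apply Parseval to this finite expansion and obtain the exact value $\frac{2^k}{k!}\sqrt{2/(2k+1)}$. That is elementary and strictly sharper than the stated bound. Used in the proof of Lemma~\ref{lem:bound:mtx}, it would replace $28$ by $\sqrt2$ and $9\rho(A)$ by $8\rho(A)$. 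The coefficient-wise estimate would only be needed for non-polynomial functions, which do not occur here.

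For \eqref{eq:lem:matching}, your induction is the same band-structure argument that the paper delegates to the walk-counting Lemma~5.1 of \cite{PozTud18}. You should settle your hedge rather than leave it open, because the convention does cut exactly the critical row.
\begin{itemize}
\item By \eqref{eq:T_hatm}, $\hat T_M$ keeps only rows $1,\dots,M-1$ (1-based), consistent with the zeroed last row of $T_M$.
\item In your inductive step, $T_\infty(T_\infty^j e_1)$ reaches row $j+2$. The condition you need is therefore $j+2\le M-1$, not $j+1\le M-1$ as you wrote.
\item At $k=M-1$ the identity is actually false. We have $e_M^T T_\infty^{M-1} e_1=\prod_{i=1}^{M-1}[T_\infty]_{i+1,i}\neq 0$, since the subdiagonal entries are $1/\sqrt{(2i-1)(2i+1)}$, whereas row $M$ of $\hat T_M$ is zero. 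Already for $M=2$, $T_\infty e_1=e_1+e_2/\sqrt3$ but $\hat T_2 e_1=e_1$.
\end{itemize}
So your induction proves exactly the correct range $k=0,\dots,M-2$. Your first fallback (restating the range as $k\le M-2$) is the right one. The second (that the retained block contains all $M$ rows reached) is not available under \eqref{eq:T_hatm}.

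This off-by-one lies in the statement itself, and the paper's citation-based proof passes over it. The paper later uses the identity with exponent $\ell+1=M-1$ (for $\ell=M-2$) in Lemma~\ref{lem:bound:mtx}. The correction restricts that lemma to $\ell\le M-3$.
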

\begin{proof}
Eq.~\eqref{eq:lem:matching} is a well-known property of tridiagonal matrices. A possible proof can be given by considering $T_\infty$ and $\hat{T}_{M}$ as adjacency matrices of (weighted) graphs. Then, the equality is obtained, for example, by Lemma 5.1 in \cite{PozTud18}.
To prove Formula~\eqref{eq:lem:norm:bound}, we first recall that 
\[ \phi(t)^T(T_\infty)^k e_1 \sqrt{2} = \phi(t)^T (T_\infty)^{k+1} \phi(-1) = \frac{(t+1)^k}{k!}, \]
where we have used Eqs.~\eqref{eq:e1:phi} and \eqref{eq:star:powers}. This means that the vector
\(a^{(k)} := (T_\infty)^k e_1 \sqrt{2} \) is composed of the coefficients of the Legendre expansion of the function \((t+1)^k/k!\). Indeed, \((t+1)^k/k! = \phi(t)^T (T_\infty)^k \, T_\infty \, \phi(-1) = \phi(t)^T (T_\infty)^k e_1 \sqrt{2}\).
Consider the expansion into orthonormal Legendre polynomials
\[ (t+1)^{k} = \sum_{j=0}^k \beta_j p_j(t), \quad |\beta_j| \leq \frac{25}{9}\left( \frac{9}{4} \right)^k \left( \frac{4}{5} \right)^j \sqrt{2(2j + 1)},  \]
where the bound is obtained using  the one in \cite[Formula (2.2)]{wang} with $\rho=5/4$, rescaled for \emph{orthonormal} Legendre polynomials. Then
\[\| a^{(k)} \| \leq \frac{1}{k!} \frac{25}{9}\left( \frac{9}{4} \right)^k \sqrt{\sum_{j=0}^k 2(2j + 1) \left(\frac{4}{5}\right)^j} \leq \frac{7\sqrt{2}}{k!} \frac{25}{9} \left( \frac{9}{4} \right)^k  \leq  \frac{28}{k!} \left( \frac{9}{4} \right)^k.  \]
\end{proof}

Using Lemma~\ref{lem:tridiag} and the properties of the Kronecker product, we get the following.
\begin{align*}
  (A \otimes  T_\infty)^k\, (I \otimes e_1) &=
  (A^k \otimes  T_\infty^k\, e_1) = (A^k \otimes  \hat{T}_{M}^k\, e_1)
  = (A \otimes  \hat{T}_{M})^k\, (I \otimes e_1), \quad k = 0, \dots, M-1.
\end{align*}
Under the assumptions of Lemma~\ref{lem:bound:star}, for $M$ large enough and $\ell \leq M-1$, we get
\begin{align*}
    \left\| (I \otimes \phi(t)^T)\left[ (I - A \otimes T_\infty)^{-1} - p_\ell(A \otimes \hat{T}_{M}) \right] (I \otimes e_1 \sqrt{2}) \right\| 
    &= \left \|  \exp(A(t+1)) -  [p_\ell^\star(A)\star \Theta](t,-1) \right \| \\
    &\leq \kappa(Z)  \frac{1}{|1-c|-|c|\gamma_\ell}  \left|\frac{c \gamma_\ell}{1-c}\right|^{\ell+1}.
\end{align*}

\begin{lem}\label{lem:bound:mtx} Let $A$ be a diagonalizable matrix with spectral radius $\rho(A)$ and eigenvector basis $Z$, and let $p_\ell$ be defined as in \eqref{eq:pl:mtx}. Moreover, assume that there exists $c \in \mathbb{C}$ so that \((I-A\otimes T_{M})^{-1}\) can be expanded into a convergent Taylor series centered at $c$.
Then, for 
 \[ 9\rho(A) \left(\ln\left(\frac{|1-c|}{|c|}\right)\right)^{-2}\frac{1}{|c|} - 1 \leq \ell \leq M-2, \quad \gamma_\ell :=\exp\left(\sqrt{\frac{9\rho(A)}{(\ell+1)|c|}}\right), \]
the following bound holds
\begin{align*}
\left\|(I\hspace{-1pt}\otimes \hspace{-1pt}\phi(t)^T)\left( (I\hspace{-2pt}-\hspace{-2pt}A\hspace{-1pt}\otimes\hspace{-1pt}\hat{T}_{M})^{-1}\hspace{-4pt}-\hspace{-3pt}p_\ell(A\hspace{-1pt}\otimes\hspace{-1pt}\hat{T}_{M}) \right) (I\hspace{-1pt}\otimes\hspace{-1pt} e_1\hspace{-5pt} \sqrt{2})\right\|\hspace{-2pt} 
&\leq 28 C_M \kappa(Z) M \left| \frac{c\gamma_\ell}{1-c} \right| ^{\ell + 1}, 
\end{align*}
with \(C_M:= \| (I-A \otimes \hat{T}_{M})^{-1} \|\).
\end{lem}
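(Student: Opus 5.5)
The plan is to write the difference as the tail of the Taylor series of $(I-A\otimes\hat T_M)^{-1}$ centered at $c$, and to bound it term by term.

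\textbf{Step 1: tail of the Taylor series.} By assumption,
\[
(I-A\otimes \hat T_M)^{-1} = \sum_{k=0}^\infty \frac{(A\otimes\hat T_M - cI)^k}{(1-c)^{k+1}} .
\]
Subtracting $p_\ell(A\otimes\hat T_M)$ from \eqref{eq:pl:mtx} leaves only the terms with $k\ge \ell+1$.

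\textbf{Step 2: reduce to scalar terms.} For each tail term, expand the power by the binomial formula, exactly as in the proof of Theorem~\ref{thm:expansion}; the two summands commute because one is a multiple of the identity. This gives
\[
(A\otimes\hat T_M - cI)^k (I\otimes e_1\sqrt2) = \sum_{j=0}^k \binom{k}{j}(-c)^{k-j} A^j\otimes \hat T_M^j e_1\sqrt2 .
\]
For $j\le M-1$, Lemma~\ref{lem:tridiag} lets us replace $\hat T_M^j e_1$ by $T_\infty^j e_1$, and \eqref{eq:lem:norm:bound} bounds its norm by $28(9/4)^j/j!$. For $j\ge M$ we have $\hat T_M^j e_1=0$: $\hat T_M$ is upper Hessenberg restricted to the first $M$ rows, so $e_1$ cannot propagate past index $M$, and applying the truncated operator enough times kills it. This is also where the restriction $\ell\le M-2$ enters.

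Using $A=Z\Lambda Z^{-1}$ gives $\|A^j\|\le\kappa(Z)\rho(A)^j$. Since $\phi(t)$ contains only the first $M$ Legendre polynomials in the relevant block, the factor $\|(I\otimes\phi(t)^T)\|$ restricted to the first $M$ components is at most $\sqrt{\sum_{j<M}(2j+1)/2}=M/\sqrt2$; this produces the factor $M$. Combining, each tail term is bounded by
\[
28\,\kappa(Z)\, M \sum_{j=0}^{k} \binom{k}{j}|c|^{k-j}\frac{(9\rho(A)/4)^j}{j!}
= 28\,\kappa(Z)\, M\, |c|^k\, \widetilde L_k\!\left(\frac{9\rho(A)}{4|c|}\right),
\]
where $\widetilde L_k$ is the Laguerre sum with all signs made positive.

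\textbf{Step 3: resum as in Lemma~\ref{lem:bound:star}.} The bound $\widetilde L_k(z)\le \exp(2\sqrt{kz})$ with $z=9\rho(A)/(4|c|)$ gives $\exp(\sqrt{9k\rho(A)/|c|})$, which matches the stated $\gamma_\ell$. The lower bound on $\ell$ guarantees $|c\gamma_\ell/(1-c)|<1$, so the geometric tail sums to
\[
\left|\frac{c\gamma_\ell}{1-c}\right|^{\ell+1}\frac{1}{|1-c|-|c|\gamma_\ell}.
\]

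\textbf{Main obstacle: where $C_M$ comes from.} The stated bound has $C_M=\|(I-A\otimes\hat T_M)^{-1}\|$ in place of the factor $1/(|1-c|-|c|\gamma_\ell)$. A cleaner route to this form is the exact remainder identity for the truncated Neumann-type series. With $B=A\otimes\hat T_M$ and $q=(B-cI)/(1-c)$, we have $I-B=(1-c)(I-q)$, and therefore
\[
(I-B)^{-1}-p_\ell(B) = (I-B)^{-1}\, q^{\ell+1}.
\]
Taking norms, the factor $\|(I-B)^{-1}\|=C_M$ appears directly. Because $(I-B)^{-1}$ does not commute with $I\otimes\phi(t)^T$, the projection must be handled: bound $\|I\otimes\phi(t)^T\|$ on the span of the first $M$ coefficients by $M$, using that the range of $B$ lies in that span. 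What remains is $\|q^{\ell+1}(I\otimes e_1\sqrt2)\|$, which is bounded by the Step 2 computation with $k=\ell+1\le M-1$, where Lemma~\ref{lem:tridiag} applies. This yields $28C_M\kappa(Z)M|c\gamma_\ell/(1-c)|^{\ell+1}$, up to absorbing $|1-c|$ factors. I would adopt this second route as the actual proof. The main risk is bounding $\|q^{\ell+1}e_1\|$ with the exact constants, in particular checking that the scalar bound $\exp(2\sqrt{kz})$ with $z=9\rho(A)/(4|c|)$ reproduces $\gamma_\ell$ exactly.
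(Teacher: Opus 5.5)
Your adopted second route is essentially the paper's proof. It uses the remainder identity $(I-B)^{-1}-p_\ell(B)=(I-B)^{-1}q^{\ell+1}$, confines all relevant vectors to the first $M$ Legendre coefficients so that $\phi(t)$ can be truncated (giving the factor $M$), applies Lemma~\ref{lem:tridiag} to powers up to $\ell+1\le M-1$, and bounds $\sum_{j}\binom{\ell+1}{j}z^j/j!\le \exp(2\sqrt{(\ell+1)z})$ with $z=9\rho(A)/(4|c|)$, which gives $\gamma_\ell^{\ell+1}$ exactly with no leftover $|1-c|$ factors. Do drop the abandoned first route: its claim that $\hat T_M^j e_1=0$ for $j\ge M$ is false, because on $\mathrm{span}\{e_1,\dots,e_{M-1}\}$, which $\hat T_M$ leaves invariant, it acts as the nonsingular matrix $\tilde T_{M-1}$; that is why the full tail $k\ge\ell+1$ cannot be bounded term by term with Lemma~\ref{lem:tridiag}.
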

\begin{proof}
The matrix \( (I - A \otimes T_{M})^{-1}  (I \otimes e_1 \sqrt{2})\) has finite size, while the matrix \( (I - A \otimes \hat{T}_{M})^{-1}  (I \otimes e_1 \sqrt{2})\) has an infinite number of rows. However, this difference is virtual, since all the additional rows of \( (I - A \otimes \hat{T}_{M})^{-1}  (I \otimes e_1 \sqrt{2})\) are equal to zero.
Therefore, the Taylor expansions of \( (I - A \otimes T_{M})^{-1}  (I \otimes e_1 \sqrt{2})\) and \( (I - A \otimes \hat{T}_{M})^{-1}  (I \otimes e_1 \sqrt{2})\) converge under the same conditions on \(c\). Then, the well-known remainder of the Taylor expansion gives
\begin{align*}
\left( (I\hspace{-2pt}-\hspace{-2pt}A\hspace{-1pt}\otimes\hspace{-1pt}\hat{T}_{M})^{-1}\hspace{-4pt}-\hspace{-3pt}p_\ell(A\hspace{-1pt}\otimes\hspace{-1pt}\hat{T}_{M}) \right) (I\hspace{-1pt}\otimes\hspace{-1pt} e_1\hspace{-5pt} \sqrt{2})\hspace{-2pt} &=  (I-A \otimes \hat{T}_{M})^{-1}   \left( \frac{A\otimes \hat{T}_{M} - cI}{1-c} \right)^{\ell+1} (I \otimes e_1 \sqrt{2}) \\
&=  (I-A \otimes \hat{T}_{M})^{-1}   \left( \frac{A\otimes {T}_{\infty} - cI}{1-c} \right)^{\ell+1} (I \otimes e_1 \sqrt{2}) \\
&=  \frac{(I\hspace{-2pt}-\hspace{-2pt}A\hspace{-1pt}\otimes\hspace{-1pt}\hat{T}_{M})^{-1}}{(1-c)^{\ell+1}} \hspace{-2pt} \sum_{k=0}^{\ell+1} \binom{\ell+1}{k} \left( A^k\otimes {T}_{\infty}^k e_1\hspace{-5pt} \sqrt{2}\right) (-c)^{\ell+1-k} 
\end{align*}
Moreover, denoting by \(\hat{\phi}^{(M)}(t)\) the vector obtained by setting to zero all elements after the $M$-th one in $\phi(t)$, we obtain
\begin{align*} 
    (I\hspace{-1pt}\otimes \hspace{-1pt}\hat\phi^{(M)}(t)^T) (I\hspace{-2pt}-\hspace{-2pt}A\hspace{-1pt}\otimes\hspace{-1pt}\hat{T}_{M})^{-1}(I\hspace{-1pt}\otimes\hspace{-1pt} e_1\hspace{-5pt} \sqrt{2}) 
    &= (I\hspace{-1pt}\otimes \hspace{-1pt}\phi(t)^T) (I\hspace{-2pt}-\hspace{-2pt}A\hspace{-1pt}\otimes\hspace{-1pt}\hat{T}_{M})^{-1}  (I\hspace{-1pt}\otimes\hspace{-1pt} e_1\hspace{-5pt} \sqrt{2}) \\ 
    (I\hspace{-1pt}\otimes \hspace{-1pt}\hat\phi^{(M)}(t)^T)p_\ell(A\hspace{-1pt}\otimes\hspace{-1pt}\hat{T}_{M}) (I\hspace{-1pt}\otimes\hspace{-1pt} e_1\hspace{-5pt} \sqrt{2}) 
    &= (I\hspace{-1pt}\otimes \hspace{-1pt}\phi(t)^T) p_\ell(A\hspace{-1pt}\otimes\hspace{-1pt}\hat{T}_{M})(I\hspace{-1pt}\otimes\hspace{-1pt} e_1\hspace{-5pt} \sqrt{2}),
\end{align*}
where the second equality holds since $\ell \leq M-2$. As a consequence, 
\begin{align*}
G_\ell(t) &:= (I\hspace{-1pt}\otimes \hspace{-1pt}\phi(t)^T)\left( (I\hspace{-2pt}-\hspace{-2pt}A\hspace{-1pt}\otimes\hspace{-1pt}\hat{T}_{M})^{-1}\hspace{-4pt}-\hspace{-3pt}p_\ell(A\hspace{-1pt}\otimes\hspace{-1pt}\hat{T}_{M}) \right) (I\hspace{-1pt}\otimes\hspace{-1pt} e_1\hspace{-5pt} \sqrt{2}) \\ 
&=     (I\hspace{-1pt}\otimes \hspace{-1pt}\hat{\phi}^{(M)}(t)^T)\left( (I\hspace{-2pt}-\hspace{-2pt}A\hspace{-1pt}\otimes\hspace{-1pt}\hat{T}_{M})^{-1}\hspace{-4pt}-\hspace{-3pt}p_\ell(A\hspace{-1pt}\otimes\hspace{-1pt}\hat{T}_{M}) \right) (I\hspace{-1pt}\otimes\hspace{-1pt} e_1\hspace{-5pt} \sqrt{2}).
\end{align*}
Thus, combining the previous relations with Lemma~\ref{lem:tridiag} gives
\begin{align*}
\left\|G_\ell(t)\right\|
&\leq  \frac{\| (I \otimes \hat\phi^{(M)}(t)^T) (I-A \otimes \hat{T}_{M})^{-1} \|}{|(1-c)^{\ell+1}|}  \sum_{k=0}^{\ell+1} \binom{\ell+1}{k} \left\| A^k\otimes {T}_{\infty}^k e_1 \sqrt{2}\right\| |c|^{\ell+1-k} \\
&\leq  \frac{\| I \otimes \hat\phi^{(M)}(t)^T\| \, C_M}{|(1-c)^{\ell+1}|}  \sum_{k=0}^{\ell+1} \binom{\ell+1}{k} \left\| A^k \right\| \left\| {T}_{\infty}^k  e_1 \sqrt{2} \right\| |c|^{\ell+1-k} \\
&\leq  \frac{\|\hat\phi^{(M)}(t) \|C_M}{|(1-c)^{\ell+1}|}  \sum_{k=0}^{\ell+1} \binom{\ell+1}{k} \left\| A \right\|^k  \frac{28}{k!} \left( \frac{9}{4} \right)^k |c|^{\ell+1-k} \\
&\leq 28 M C_M \kappa(Z) \frac{|c|^{\ell+1}}{|(1-c)^{\ell+1}|}  \sum_{k=0}^{\ell+1} \binom{\ell+1}{k}   \frac{1}{k!} \left( \frac{9 \rho(A)}{4 |c|} \right)^k \\
&\leq 28 \kappa(Z) C_M M \left( \left| \frac{c}{(1-c)} \right| \ \exp \left( \sqrt{\frac{9 \rho(A)}{(\ell + 1)|c|}} \right) \right)^{\ell + 1},
\end{align*}
where we used the fact that \( \| \phi^{(M)}(t) \| \leq M\) since the $j$-th element of the vector is bounded by \(|[\phi^{(M)}(t)]_j| \leq \sqrt{2j+1}\), \(t \in [-1,1]\).
Similarly to the proof of Lemma~\ref{lem:bound:star}, for
\[\ell+1 > 9\rho(A) \left(\ln\left(\frac{|1-c|}{|c|}\right)\right)^{-2}\frac{1}{|c|}, \quad\gamma_\ell :=\exp\left(\sqrt{\frac{9\rho(A)}{(\ell+1)|c|}}\right), \] 
 it holds
\begin{align*}
\left\|(I\hspace{-1pt}\otimes \hspace{-1pt}\phi(t)^T)\left( (I\hspace{-2pt}-\hspace{-2pt}A\hspace{-1pt}\otimes\hspace{-1pt}\hat{T}_{M})^{-1}\hspace{-4pt}-\hspace{-3pt}p_\ell(A\hspace{-1pt}\otimes\hspace{-1pt}\hat{T}_{M}) \right) (I\hspace{-1pt}\otimes\hspace{-1pt} e_1\hspace{-5pt} \sqrt{2})\right\|\hspace{-2pt} 
&\leq 28 C_{M} \kappa(Z) M \left| \frac{c\gamma_\ell}{1-c} \right| ^{\ell + 1}, 
\end{align*}
concluding the proof.
\end{proof}

\begin{thm}\label{thm:bound}
Let $A$ be a diagonalizable matrix with spectral radius $\rho(A)$ and eigenvector basis $Z$ and
assume that there exists $c \in \mathbb{C}$ so that \((I-A\otimes T_{M})^{-1}\) can be expanded into a convergent Taylor series centered at $c$. Then, for 
\[M  \geq 9\rho(A) \frac{1}{|c|}\left(\ln\left(\frac{|1-c|}{|c|}\right)\right)^{-2} + 1, \quad \mu_{M} :=\exp\left(\sqrt{\frac{9\rho(A)}{(M-1)|c|}}\right),\]
the following bound holds
\[ \left\| \exp(A(t+1)) - (I \otimes \phi(t)^T)(I-A \otimes T_{M})^{-1}(I \otimes e_1 \sqrt{2})  \right\| \leq \kappa(Z) K_M M \left| \frac{c\mu_{M}}{1-c} \right|^{M-1},  \]
with \( K_M :=\left| (|1-c|-|c|\mu_{M})M \right|^{-1} + 28 \| (I- A \otimes \hat{T}_M)^{-1} \| \). Moreover, if \(c = - 9 \rho(A)/(M-1)\) is an admissible value, then for \(M \geq 18 \rho(A)+1\) the bound becomes:
\[ \left\| \exp(A(t+1)) - (I \otimes \phi(t)^T)(I-A \otimes T_{M})^{-1}(I \otimes e_1 \sqrt{2})  \right\| \leq \kappa(Z) K_M M \left| \frac{9\rho(A)e}{M-1} \right|^{M-1},   \]
which describes a super-exponential decay.
\end{thm}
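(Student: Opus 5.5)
The plan is a triangle inequality with the $\star$-polynomial $p_\ell^\star(A)\star\Theta$ as the intermediate term, taking $\ell = M-2$, the largest value allowed in Lemma~\ref{lem:bound:mtx}. First, since $(I-A\otimes T_M)^{-1}(I\otimes e_1\sqrt2)$ agrees with $(I-A\otimes\hat T_M)^{-1}(I\otimes e_1\sqrt2)$ after padding with zeros, and since the extra components of $\phi(t)$ meet zero rows, we may replace $T_M$ by $\hat T_M$ in the statement. Then we write
\begin{align*}
&\exp(A(t+1)) - (I\otimes\phi(t)^T)(I-A\otimes\hat T_M)^{-1}(I\otimes e_1\sqrt2) \\
&\quad = \bigl(\exp(A(t+1)) - [p_\ell^\star(A)\star\Theta](t,-1)\bigr) + (I\otimes\phi(t)^T)\bigl(p_\ell(A\otimes\hat T_M) - (I-A\otimes\hat T_M)^{-1}\bigr)(I\otimes e_1\sqrt2).
\end{align*}
This uses the identity established just before Lemma~\ref{lem:bound:mtx}: for $\ell\le M-1$, Lemma~\ref{lem:tridiag} gives $p_\ell(A\otimes T_\infty)(I\otimes e_1)=p_\ell(A\otimes\hat T_M)(I\otimes e_1)$, so the first bracket is the $\star$-truncation error.

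The second term is bounded directly by Lemma~\ref{lem:bound:mtx} with $\ell=M-2$. Its hypothesis $\ell+1\ge 9\rho(A)(\ln(|1-c|/|c|))^{-2}/|c|$ is exactly the assumed lower bound on $M$, and its $\gamma_\ell$ becomes $\mu_M$. This gives the bound $28\,\kappa(Z)\,C_M\,M\,|c\mu_M/(1-c)|^{M-1}$. For the first term we use Lemma~\ref{lem:bound:star} with $\ell=M-2$. Because $t+1\le 2$, its $\gamma_\ell = \exp\bigl(2\sqrt{|\lambda|(t+1)/((\ell+1)|c|)}\bigr) \le \exp\bigl(\sqrt{8\rho(A)/((M-1)|c|)}\bigr) \le \mu_M$, and its admissibility condition $4\rho(A)(t+1)\le 8\rho(A)<9\rho(A)$ is implied by ours. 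Monotonicity of the bound in $\gamma$ then gives $\kappa(Z)\,(|1-c|-|c|\mu_M)^{-1}\,|c\mu_M/(1-c)|^{M-1}$. Writing the prefactor as $M\cdot\bigl((|1-c|-|c|\mu_M)M\bigr)^{-1}$ and adding the two bounds produces $\kappa(Z)K_M M|c\mu_M/(1-c)|^{M-1}$.

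For the particular choice, put $c=-9\rho(A)/(M-1)$, so $|c|<1/2$ once $M\ge 18\rho(A)+1$ and $|c|<|1-c|=1+|c|$. Then $\mu_M=\exp(\sqrt{9\rho(A)/((M-1)|c|)})=e$. Also $|c/(1-c)|\le|c|=9\rho(A)/(M-1)$, which gives the displayed bound. We still must check the lower bound on $M$, namely $(M-1)|c|\ge 9\rho(A)\bigl(\ln(1+1/|c|)\bigr)^{-2}$. This is $\ln(1+1/|c|)\ge 1$, which holds since $1/|c|\ge 2>e-1$.

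The step I expect to be most delicate is the matching of parameters between Lemma~\ref{lem:bound:star} and Lemma~\ref{lem:bound:mtx}. The two lemmas define $\gamma_\ell$ with different constants, and one must check that a single $\mu_M$ dominates both, using $t+1\le2$. The other point needing care is the reduction from $T_M$ to $\hat T_M$ together with the factor bookkeeping in $K_M$. If the monotonicity argument for the first term proved awkward, I would instead rerun the geometric-series estimate from the proof of Lemma~\ref{lem:bound:star} directly with $\mu_M$ in place of $\gamma_\ell$.
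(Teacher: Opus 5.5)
Your proposal is correct and follows the paper's proof. It uses the same splitting $E(t)=F_\ell(t)-G_\ell(t)$ through the $\star$-polynomial $p_\ell^\star(A)\star\Theta$, then Lemmas~\ref{lem:bound:star} and \ref{lem:bound:mtx} with $\ell=M-2$, and the same check for $c=-9\rho(A)/(M-1)$. Your explicit use of $t+1\le 2$ fills in a step the paper performs silently: it shows that the $\gamma_\ell$ of Lemma~\ref{lem:bound:star} is dominated by $\mu_M$, and that its admissibility condition follows from the hypothesis on $M$.
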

\begin{proof}
Let us define the following errors
\begin{align*}
    E(t) &:=  \exp(A(t+1)) - (I \otimes \phi(t)^T) (I - A \otimes T_{M})^{-1}  (I \otimes e_1 \sqrt{2}),
     \\
    F_\ell(t)  &:=  (I \otimes \phi(t)^T)\left[ (I - A \otimes T_\infty)^{-1} - p_\ell(A \otimes \hat{T}_{M}) \right] (I \otimes e_1 \sqrt{2}),  \\
    G_\ell(t) &:= (I \otimes \phi(t)^T) \left( (I-A\otimes\hat{T}_{M})^{-1}-p_\ell(A\otimes\hat{T}_{M}) \right) (I\otimes e_1\sqrt{2}) .
\end{align*}
As already pointed out, the matrix \( (I - A \otimes T_{M})^{-1}  (I \otimes e_1 \sqrt{2})\) has a finite size, while the matrix \( (I - A \otimes \hat{T}_{M})^{-1}  (I \otimes e_1 \sqrt{2})\) has an infinite number of rows. However, this difference is virtual, since all the additional rows of \( (I - A \otimes \hat{T}_{M})^{-1}  (I \otimes e_1 \sqrt{2})\) are equal to zero. Therefore,
    \begin{align*}
    \| E(t) \| &= \left\|  (I \otimes \phi(t)^T)\left[ (I - A \otimes T_\infty)^{-1}  - (I - A \otimes \hat{T}_{M})^{-1} \right] (I \otimes e_1 \sqrt{2}) \right\| \\
    &= \left\|  (I \otimes \phi(t)^T)\left[ (I - A \otimes T_\infty)^{-1} - p_\ell(A \otimes \hat{T}_{M}) + p_\ell(A \otimes \hat{T}_{M})  - (I - A \otimes \hat{T}_{M})^{-1} \right] (I \otimes e_1 \sqrt{2}) \right\| \\
    & = \left\| F_\ell(t) - G_\ell(t) \right\|.
    \end{align*}
    By Lemmas~\ref{lem:bound:star} and \ref{lem:bound:mtx}, 
     for
\[ 9\rho(A) \left(\ln\left(\frac{|1-c|}{|c|}\right)\right)^{-2}\frac{1}{|c|}-1 \leq \ell \leq M - 2, \quad  \mu_{\ell+2} :=\exp\left(\sqrt{\frac{9\rho(A)}{(\ell+1)|c|}}\right),  \] 
    we obtain
    \begin{align*}
        \| E(t) \| &\leq \| F_\ell(t) \| + \| G_\ell(t) \| \leq \kappa(Z)  \frac{1}{|1-c|-|c|\mu_{\ell+2}}  \left|\frac{c \mu_{\ell+2}}{1-c}\right|^{\ell+1} + 28 C_{M} \kappa(Z) M \left| \frac{c \mu_{\ell+2}}{1-c} \right| ^{\ell + 1} \\
        &\leq \kappa(Z) \left( \frac{1}{|1-c|-|c|\mu_{\ell+2}}\frac{1}{M} + 28 C_{M} \right) M \left| \frac{c \mu_{\ell+2}}{1-c} \right| ^{\ell + 1} \leq \kappa(Z) K_M M \left| \frac{c \mu_{\ell+2}}{1-c} \right| ^{\ell + 1}.
    \end{align*}
    
    Finally, assuming that the Taylor expansion centered at \(c = -9\rho(A)/(M-1)\) converges and setting \(\ell = M-2\), we get
    \begin{align*}
        \| E(t) \| &\leq \kappa(Z)K_M M \left(\frac{\mu_{M}}{M-1} \right)^{M-1} \leq \kappa(Z)K_M M\left(\frac{9 \rho(A) e}{M-1} \right)^{M-1} , 
    \end{align*}
    note that for \(M \geq 18 \rho(A)+1\) the condition
    \begin{align*}
         M  \geq 
         9\rho(A) \left(\ln\left(\frac{|1-c|}{|c|}\right)\right)^{-2}\frac{1}{|c|} +1 = (M-1)\left(\ln\left(1 + \frac{M-1}{9 \rho(A)}\right)\right)^{-2}+1,
    \end{align*}
    is satisfied.
\end{proof}

The Theorem holds under the 
assumption that \((I-A\otimes T_{M})^{-1}\) can be expanded into a convergent Taylor series centered on $c$. However, this is not always possible; for example, when \(A\otimes T_{M}\) has real eigenvalues larger and smaller than $1$. In general, a sufficient condition is \(\rho(A)\rho(T_M) <1\). However, something more can be said using the following Lemma.  

\begin{lem}
    The eigenvalues of the truncated matrix $T_M$ have a nonnegative real part.
\end{lem}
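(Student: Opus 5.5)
The idea is to identify $T_M$ with a Galerkin section of the Volterra integration operator. That operator has a positive semidefinite symmetric part, and the zeroed last row lets us reduce the eigenproblem of $T_M$ to a principal section of $T_\infty$, where this semidefiniteness survives.

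\emph{Step 1: symmetric part of $T_\infty$.} By \eqref{eq:coeff_heavidise}, the full infinite matrix has entries
\[
[T_\infty]_{k,\ell}=\int_{\mathcal I}\int_{\mathcal I}\Theta(\tau-\rho)\,p_k(\tau)p_\ell(\rho)\,d\rho\,d\tau .
\]
Since $\Theta(\tau-\rho)+\Theta(\rho-\tau)=1$ almost everywhere on $\mathcal I\times\mathcal I$, we get
\[
[T_\infty]_{k,\ell}+[T_\infty]_{\ell,k}=\Big(\int_{\mathcal I}p_k\Big)\Big(\int_{\mathcal I}p_\ell\Big)=2\,\delta_{k0}\delta_{\ell 0}.
\]
The last equality uses $p_0=1/\sqrt2$ and the orthogonality of $p_k$, $k\ge1$, to constants. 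Hence $T_\infty+T_\infty^T=2e_1e_1^T$. The same identity then holds for every leading principal section $S_m:=[T_\infty]_{k,\ell=0}^{m-1}$, so $S_m+S_m^T=2e_1e_1^T\succeq 0$. Since $T_\infty$ is real, for any $y\in\mathbb C^m$ we have
\[
\mathrm{Re}\,(y^*S_my)=\tfrac12\,y^*(S_m+S_m^T)y=|y_1|^2\ge 0 .
\]

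\emph{Step 2: reduction from $T_M$ to $S_{M-1}$.} $T_M$ itself does not have a semidefinite symmetric part, because zeroing the last row breaks the antisymmetry at the $(M-2,M-1)$ position. So I would not apply Step 1 to $T_M$ directly. Instead, let $T_Mx=\lambda x$ with $x\neq0$. The last row of $T_M$ is zero, so $\lambda x_{M-1}=0$. If $\lambda=0$ there is nothing to prove. Otherwise $x_{M-1}=0$; write $x=(y,0)$ with $y\in\mathbb C^{M-1}$, $y\ne0$. The first $M-1$ rows of $T_M$ coincide with those of $T_\infty$, so the first $M-1$ equations read $S_{M-1}y=\lambda y$. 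Thus every nonzero eigenvalue of $T_M$ is an eigenvalue of $S_{M-1}$.

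\emph{Step 3: conclusion.} For such a pair, $\lambda\|y\|^2=y^*S_{M-1}y$, and Step 1 gives $\mathrm{Re}\,\lambda=|y_1|^2/\|y\|^2\ge0$.

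The only delicate point is the identity $T_\infty+T_\infty^T=2e_1e_1^T$. It rests on the normalization $p_0=1/\sqrt2$ and on the Heaviside identity above; both are routine, so I do not expect any step to be a real obstacle. The one thing to avoid is working with $T_M$ directly, since its symmetric part is indefinite; the zero last row is exactly what makes the reduction in Step 2 work.
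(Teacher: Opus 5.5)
Your proof is correct and follows the paper's route: write the relevant section of $T_\infty$ as $e_1e_1^T$ plus a skew-symmetric matrix, show via the Rayleigh quotient that $\mathrm{Re}\,\lambda = |y_1|^2/\|y\|^2 \ge 0$, and use the zero last row of $T_M$ to reduce to the leading $(M-1)\times(M-1)$ section (the paper phrases this as the spectrum of $T_M$ being $0$ together with that of the leading section). The only difference is that you derive $T_\infty+T_\infty^T=2e_1e_1^T$ directly from $\Theta(\tau-\rho)+\Theta(\rho-\tau)=1$ and $\int_{\mathcal I}p_k=\sqrt2\,\delta_{k0}$, whereas the paper reads this structure off the explicit tridiagonal form quoted from the literature; your version is therefore self-contained and does not depend on the exact values of the off-diagonal entries.
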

\begin{proof}
  Consider the matrix
    \[ \tilde{T}_M = \begin{pmatrix}
1 & -\alpha_1 & 0 & 0 & \cdots & 0 \\
\alpha_1 & 0 & -\alpha_2 & 0 & \cdots & 0 \\
0 & \alpha_2 & 0 & -\alpha_3 & \ddots & \vdots \\
\vdots & \ddots & \ddots & \ddots & \ddots & 0 \\
0 & \cdots & 0 & \alpha_{M-2} & 0 & -\alpha_{M-1} \\
0 & \cdots & \cdots & 0 & \alpha_{M-1} & 0
\end{pmatrix},
\qquad
\alpha_k = \frac{1}{\sqrt{(k+2)\,k}},\]
which can be decomposed into
\( \tilde{T}_M = E_1 + K_M,\)
with $K_M$ skew and $E_1=e_1 e_1^T$. 
Let $\lambda$ be an eigenvalue of $\tilde{T}_M$ with $v$ a related eigenvector with \(\|v\|^2 = 1\). Then
\begin{align*}
    \lambda  = v^H \tilde{T}_M \, v  = v^H E_1 \,v + v^H K_M \, v.
\end{align*}
Clearly $v^H E_1 v \geq 0$ while $v^H K_M v$ is purely imaginary. Hence, the real part of $\lambda$ is nonnegative.
Note that, according to the results in \cite{pozza2023newk}, the matrix $T_M$ is equal to $\tilde{T}_M$ but for the last row, where it is zero. 
Therefore, the eigenvalues of $T_M$ are $0$ and all the eigenvalues of $\tilde{T}_{M-1}$, concluding the proof.
\end{proof}

It is possible to rewrite the matrix exponential in the more convenient form:
\[\exp(A(t+1)) = \exp(\alpha I (t+1)) \exp((A-\alpha I)(t+1)).\]
Denote the eigenvalues of the matrices $A$ and $T_M$, respectively, by $\lambda_i$ and $\theta_j$. Then the eigenvalues of \((A-\alpha I)\otimes T_M\) are of the form \( (\lambda_i - \alpha) \theta_j\). Since all $\theta_j$ have a positive real part, there exists a \(\alpha\) large enough, so that all the eigenvalues of \((A-\alpha I)\otimes T_M \) have a negative real part. Therefore, the resolvent \((I-(A-\alpha I)\otimes T_M)^{-1}\) can be expanded into a Taylor series for every $c<0$ leading to a super-exponential decay.

Finally, we remark that the Taylor expansion is generally not the best approximation for a matrix inverse. Moreover, when the problem is \(\exp(A(t+1))\, v\), with \(v\) a given vector, other polynomial approaches can be used, e.g., Krylov subspace methods. Thus, the fact that the Taylor series might not converge should be regarded as a failure of the presented bound and not a condition under which the $\star$-method necessarily fails. Other polynomial approaches might lead to better bounds.

\section{Numerical Methods}\label{sec3}
Numerous numerical methods have been proposed in the literature to solve matrix equations of form~\eqref{eq:Stein_refined}. We refer to \cite{simoncini2016computational} for a survey of the principal approaches. In this paper, we consider the standard Schur decomposition method, and we then combine it with a Krylov subspace approach for large and sparse matrices.

We compare our approach with two well-established routines: \texttt{expv} \cite{Sidje}, which computes an approximation of~$\exp(A)v$ for a general matrix~$A$ using Krylov subspace projection techniques without explicitly forming the matrix exponential, and \texttt{expmv\_tspan}~\cite{al2011computing}, which can return either a single matrix~$e^{tA}B$ or a sequence~$e^{t_kA}B$ on an equally spaced grid of points~$t_k$. The latter employs the scaling phase of the scaling-and-squaring method together with a truncated Taylor series approximation, determining the scaling factor and Taylor degree via the sharp truncation error bounds of~\cite{al2009sharp}, expressed in terms of~$\|A^k\|^{1/k}$ for selected values of~$k$, with norms estimated using a matrix norm estimator.
\subsection{Algorithms}\label{subsec1}
The first method to solve Eq.~\eqref{eq:ivp_refined} is based on the \emph{Schur's decomposition} of the \emph{coefficient matrix} associated with $\Theta(t-s)$, i.e., $T_M = USU^{H},$
where $U$ is unitary and $S$ is a upper triangular matrix.  
With the change of variables $Y=U^{H}X$ and $w=U^{H}\phi_M(-1)$, Eq.~\eqref{eq:Stein_refined}
becomes
\begin{equation}\label{eq:stein_schur}
    Y - \frac12 S Y \tilde{A}^{T} = w v^{T}.
\end{equation} 
Schur decomposition is performed only once, stored, and reused; the overall computational cost is therefore dominated by solving~\eqref{eq:stein_schur}.
We consider two variants for computing the solution:
\begin{itemize}
  \item[\textbf{1.}]\textbf{$\star$-method.}  
    \begin{itemize}
        \item Solve the matrix equation~\eqref{eq:stein_schur} using the MatLab R2022b function \texttt{dlyap} from the Control System Toolbox. Note that \texttt{dlyap} does not compute the Schur decomposition of $S$ since it is already triangular.
        \item Return the matrix $X = UY.$ Now, for every $t \in [-1, 1]$ the solution is accessible by the formula \( \exp\left(\frac{1}{2}\tilde{A}(t+1)\right)v \approx   X^T T_M^T \phi_M(t)  \).
    \end{itemize}

  \item[\textbf{2.}]\textbf{$\star$-method + Arnoldi.}  
    \begin{itemize}
        \item Perform $k$ iterations of the Arnoldi algorithm with inputs $\tilde{A}$ and $v$, producing an orthonormal basis $V_k$ that spans the Krylov subspace $
            \mathcal{K}_k(\tilde{A},v) = \operatorname{span}\{v, \tilde{A}v, \ldots, \tilde{A}^{k-1}v\}$, and the related upper Hessenberg matrix $H_k$. Project~\eqref{eq:stein_schur} onto this subspace yields
        \begin{equation}\label{eq:stein_reduced}
            Z - \frac{1}{2}S Z H_k^{T} = w e_1^{T} \| v \|,
        \end{equation}
        with $Y \approx Z V_k^T$, note that $Z$ has now $k$ columns, a reduced size.
        \item Solve Eq.~\eqref{eq:stein_reduced} using the \texttt{dlyap} function of MatLab.
        \item Return the matrix $X = U Y V_k^T$. Now, for every $t \in [-1, 1]$ the solution is accessible by the formula \( \exp\left(\frac{1}{2}{\tilde{A}}(t+1)\right)v \approx  X^T T_M^T \phi_M(t)  \).
    \end{itemize}
\end{itemize}
\subsection{Numerical Tests}
We evaluated the performance of the proposed $\star$-methods (Section~\ref{subsec1}) using a set of matrices from the literature, as described below. All computations were carried out in MatLab R2022b on machines with an 11th Gen Intel\textsuperscript{\textregistered} Core\texttrademark~i7 processor. Performance is measured using two criteria: \emph{accuracy}, measured as the average relative error with respect to a reference solution computed via MatLab's \texttt{expm}, and \emph{computational cost}, using MatLab's \texttt{tic}/\texttt{toc}. 

The following test matrices are used in the experiments:
\begin{itemize}
\item \textbf{2D Poisson matrix:} 
This experiment is a variation of one from \cite{al2011computing, trefethen2006talbot}. The matrix \(\tilde{A}\in\mathbb{R}^{2500\times 2500}\) is the standard finite difference discretization of the $2D$ Laplacian with opposite sign, namely the block tridiagonal matrix constructed by \verb|-gallery('poisson',n)| in MatLab on a specific (normalized) vector.

\item \textbf{Complex tridiagonal matrix:} A tridiagonal matrix with diagonal entries $2i$ and off-diagonal entries $-i$, with small perturbations ($\epsilon = 10^{-13}$) at the extreme top left and bottom right elements. Vector $v$ is the first standard unit basis vector.

\item \textbf{Matrices with decaying eigenvalues:}
The matrix is constructed as $\tilde{A} = Q\Lambda Q^T$, where 
$\Lambda = \mathrm{diag}(\lambda_1, \ldots, \lambda_n)$ with 
$\lambda_i = \exp\!\left(-\frac{5(i-1)}{n-1}\right)$ for $i = 1, \ldots, n$, 
and $Q$ is the orthogonal factor from the QR factorization of a random $n\times n$ matrix. 
The right-hand side $v$ is a random normalized vector.
    
\item \textbf{Tridiagonal Toeplitz matrix:} This experiment is taken from \cite{strang2014functions}, where we consider a symmetric tridiagonal Toeplitz matrix generated by the MatLab command \verb|toeplitz(c,r)|, and $v$ is a normalized random vector. Here, 
$c = [2, -1, 0, \dots, 0]^{T}, r = [2, -1, 0,\dots, 0].$

 \item \textbf{Pentadiagonal Toeplitz matrix }  
This experiment considers a variation of the matrix studied in \cite{aprahamian2014matrix}, generated using the MatLab command  
\verb|gallery('toeppen', n)|. The vector $v$ is a random normalized vector.
\item \textbf{Dense matrix with eigenvalues at Chebyshev nodes:} 
The matrix is constructed as $\tilde{A} = Q\Lambda Q^T$, where 
$\Lambda =  \mathrm{diag}(\lambda_1, \ldots, \lambda_n)$ with 
$\lambda_i = \cos\bigl((2i-1)\pi/(2n)\bigr)$ for $i = 1, \ldots, n$, 
and $Q$ is the orthogonal factor from the QR factorization of a random $n\times n$ matrix. 
The right-hand side $v$ is a random normalized vector.

\item \textbf{Matrix from the FEM semidiscretization of a Schrödinger equation}
In  our final experiment, we consider the Schr\"{o}dinger equation of the form:
\begin{equation*}
i \psi'(x,t) = -\frac{\hbar}{2\hat{m}} \Delta \psi(x,t) + V(x) \psi(x), \; \psi(x,0)  = \psi_0(x), \; x \in \Omega = [-2, 2] \times [-2, 2],
\end{equation*}
for \( t \in [0,t_\text{max}]\).
 The potential \(V(x)\) is defined as \(V(x) = 0\) for \(x \in \mathrm{int}(\Omega_p)\) and \(V(x) = 10\) for \(x \in \mathrm{cl}(\Omega / \Omega_p)\). The initial value $\psi_0(x)$ is \(\psi_0(x) = e^{-|x|^2 / 2}\). Without loss of generality, we set \(\hbar  = \hat{m} = t_{max} = 1\) for the sake of simplicity.
 We utilize quadratic Lagrangian elements and set Dirichlet boundary conditions, which are enforced using the \emph{stiff-spring} method.
With this approach, we obtain a system of ODEs of the form  
\[
\mathsf{M}_n \boldsymbol{\psi}'(t) = \mathsf{K}_n \boldsymbol{\psi}(t), \quad  
\boldsymbol{\psi}(0) = \boldsymbol{\psi}_0, \quad t \in (0,t_\text{max}]
\] 
whose solution is accessible as a matrix exponential $\psi(t) = e^{A t}\psi_0$, with $A = M_n^{-1}K_n$.
\end{itemize}
In all the experiments mentioned above, we have multiplied the test matrices by time $t_{\max}$.

We compare the proposed $\star$-method (Section~\ref{subsec1}) with the \texttt{expv\_tspan} algorithm \cite{al2011computing}.
In our experiments, \texttt{expv\_tspan} evaluates $\exp(t_k A)v$ at $q+1$ uniformly spaced points $t_k \in [t_0, t_{\max}]$.
In contrast, the $\star$-method does not require specifying the points at which the solution is computed, since the solution is given through the coefficients of its Legendre polynomial expansion. However, it does require setting the truncation parameter $M$ (see Eq.~\eqref{matrixrep}).
In our tests, we select $M$ to be as small as possible. We assume that this value is close to the minimal number of points required to interpolate the solution on $[t_0, t_{\max}]$ (noting that the Legendre polynomial expansion exhibits behavior similar to Chebyshev polynomials). Then, we establish $q = M$ as a simple yet fair criterion for comparing the two methods.

The remaining main parameters in the experiments are the matrix size $n$, initial time $t_0$, final time $t_{\max}$, number of Arnoldi iterations $k$. All parameters used in the experiments are listed in detail in Table~\ref{tab:parameters}, with the corresponding average relative errors in Table~\ref{tab:relative_error}, and the average computational times in Table~\ref{tab:computational_time}.
 \begin{table}[t]
 \centering
\caption{Parameters used in the numerical experiments. Here, $n$ is the matrix size, $t_{\max}$ is the final time, $k$ is the number of Arnoldi iterations, $M$ is the truncation parameter, and $q$ denotes the number of time steps in \cite{al2011computing}.}

    \label{tab:parameters}
    \begin{tabular}{llccccc}
        \toprule
        \textbf{Id} & \text{Description} & \( n \) & \( t_{\text{max}} \) & \( q=M\) & \( k \) \\
        \midrule
        1 & 2D Poisson matrix & 2500 & 4 & 22 & 35 \\
        2 & Complex tridiagonal matrix & 1002 & 8 & 7 & 17 \\
        3 & Matrix with decaying eigenvalues  & 2000 & 4 & 13 & 17 \\
        4 & Matrix with decaying eigenvalues   & 20 & 4 & 12 & 19 \\
        5 & Tridiagonal Toeplitz matrix & 100 & 4 & 25 & 22 \\
         6 & Pentadiagonal Toeplitz matrix & 1000 & 2 & 38 & 80 \\
        7 & Dense matrix with eigenvalues at Chebyshev nodes & 500 & 4 & 12 & 20 \\
    
        8 & FEM semidiscretization of a Schr\"{o}dinger equation & 841 & 0.1 & 50 & 70 \\
        9 & FEM semidiscretization of a Schr\"{o}dinger equation & 1917 & 0.1 & 150 & 120 \\ 
        10 & FEM semidiscretization of a Schr\"{o}dinger equation & 3437 & 0.1 & 190 & 180 \\
        \bottomrule
    \end{tabular}
\end{table}

\begin{table}[t]
\centering
\caption{Average relative error for the considered methods across the examples in Table~\ref{tab:parameters}.}
\label{tab:relative_error}
\begin{tabular}{lcccccc}
\toprule
\textbf{Method} & 1 & 2 & 3 & 4 & 5  \\ 
\midrule
\texttt{expv}                     & 6.2391e-15 & 2.6642e-13 & 2.5671e-15 & 8.4500e-16 & 6.1633e-15 \\ 
\texttt{expmv\_tspan}             & 6.5871e-15 & 9.7714e-15 & 1.4853e-15 & 7.5326e-16 & 6.4114e-15 \\ 
\texttt{$\star$-method}           & 6.1289e-15 & 7.9682e-14 &            & 7.2386e-15 & 9.8779e-10 \\ 
\texttt{expv\_tspan + Arnoldi}    & 6.6221e-15 & 4.3728e-14 & 5.8626e-15 & 9.3941e-16 & 5.9311e-15 \\ 
\texttt{$\star$-method + Arnoldi} & 6.6942e-15 & 7.4874e-14 & 6.3234e-15 & 9.5022e-15 & 2.8513e-10 \\ 
\midrule
 & 6 & 7 & 8 & 9 & 10  \\ 
\midrule
\texttt{expv}                     & 6.7021e-11 & 2.7977e-15 & 1.6186e-10 & 2.0248e-10 &  1.7658e-10 \\ 
\texttt{expmv\_tspan}             & 3.1677e-15 & 2.0645e-15 & 3.9387e-14 & 5.9559e-14 &  1.2115e-13 \\ 
\texttt{$\star$-method}           & 4.125e-14 & 2.4825e-14 &  3.9788e-13 & 1.6147e-12 & 2.7522e-12  \\ 
\texttt{expv\_tspan + Arnoldi}    & 1.2518e-15 & 7.9543e-14 & 1.8857e-13 & 1.3276e-12 & 7.7015e-13 \\ 
\texttt{$\star$-method + Arnoldi} & 2.202e-14 & 8.0757e-14 & 3.7302e-14 & 6.2359e-14 & 1.3388e-13 \\ 
        \bottomrule
\end{tabular}
\end{table}

\begin{table}[t]
\centering
\caption{Average computational time (in seconds) for the experiments in Table~\ref{tab:relative_error}.}
\label{tab:computational_time}
\begin{tabular}{lcccccc}
\toprule
\textbf{Method} & 1 & 2 & 3 & 4 & 5  \\ 
\midrule
\texttt{expv}                     & 0.00715 & 0.00016 & 0.14267 & 0.00018 & 0.00245 \\ 
\texttt{expmv\_tspan}             & 0.00732 & 0.01147 & 0.21265 & 0.00118 & 0.00360 \\  
\texttt{$\star$-method}           & 3.2695 &  2.7069  &               & 0.00053 & 0.00176 \\ 
\texttt{Arnoldi}                  & 0.00363 & 0.00482 & 0.03668 & 0.00013 & 0.00063 \\
\texttt{expv\_tspan + Arnoldi}    & 0.00573 & 0.00819 & 0.03840 & 0.00156 & 0.00229 \\  
\texttt{$\star$-method + Arnoldi} & 0.00520 & 0.00564 & 0.03716 & 0.00057 &  0.00211 \\  
\midrule
 & 6 & 7 & 8 & 9 & 10  \\ 
\midrule
\texttt{expv}                     & 0.00654 & 0.00405 & 0.30268 & 2.5333 & 13.1203 \\
\texttt{expmv\_tspan}             & 0.00700 & 0.00665 & 0.87762 & 7.858 & 40.9493 \\    
\texttt{$\star$-method}           & 2.0619  & 0.2244  & 1.3857 & 11.9798 & 61.6381 \\
\texttt{Arnoldi}                  & 0.00833 & 0.00092 & 0.17646 & 1.4437 & 6.6184 \\
\texttt{expv\_tspan + Arnoldi}    & 0.01079 & 0.00256 & 0.18837 & 1.4787 & 6.6919 \\  
\texttt{$\star$-method + Arnoldi} & 0.01897 & 0.00122 & 0.18358 & 1.4776 & 6.6859 \\  
        \bottomrule
\end{tabular}
\end{table}

We ran each of the described methods 100 times in the examples listed in Table~\ref{tab:parameters}. Table~\ref{tab:relative_error} shows the average relative errors at the final time $t_\text{max}$ for each test matrix. Both the $\star$-method and the $\star$-method combined with the Arnoldi algorithm achieve results comparable in accuracy to $\texttt{expv\_tspan}$.
The only exceptions are the third experiment, where the $\star$-Krylov method was too computationally expensive to execute, and the fifth experiment, where the $\star$-method fails to achieve relative errors smaller than $\mathcal{O}(10^{-10})$, even for larger values of $M$.
This large relative error can be explained by noting that the condition number of the matrix
$I - \tilde{A} t_{\max}/2 \otimes S$,
is of order \(10^{7}\). According to standard perturbation estimates (see, for example, \cite[Chapter~16]{Hig02book}), such a condition number can lead to a relative error of approximately the magnitude observed in the numerical results.

Table~\ref{tab:computational_time} reports the corresponding average computational times. While the $\star$-method combined with the Arnoldi algorithm performs comparably to, and occasionally faster than, the Arnoldi-based \texttt{expv\_tspan}, the plain $\star$-method (without a Krylov-based approach) becomes prohibitively expensive for large matrices, as expected.
It is also worth noting that although \texttt{expv} appears faster than the other methods, it computes the solution only at the final time point $t_{max}$. Therefore, comparing it directly to the others is not entirely fair. We include its runtime solely to highlight why alternative methods should be considered when solutions are required at multiple time points or across the entire interval.
\begin{table}[t]
\centering
\caption{Relative error versus truncation parameter $M$ for Example 1 ($n=50$, $k=35$, $t_{\max}=4$).}
\label{tab:errors_vs_M_example1}
\begin{tabular}{cc|cc}
\toprule
$M$ & Relative Error & $M$ & Relative Error \\
\midrule
10 & $4.23 \times 10^{-6}$  & 18 & $4.27 \times 10^{-12}$ \\
11 & $9.80 \times 10^{-7}$  & 19 & $5.39 \times 10^{-13}$ \\
12 & $2.12 \times 10^{-7}$  & 20 & $6.37 \times 10^{-14}$ \\
13 & $4.26 \times 10^{-8}$  & 21 & $1.49 \times 10^{-14}$ \\
14 & $7.90 \times 10^{-9}$  & 22 & $1.28 \times 10^{-14}$ \\
15 & $1.35 \times 10^{-9}$  & 23 & $1.26 \times 10^{-14}$ \\
16 & $2.14 \times 10^{-10}$ & 24 & $1.22 \times 10^{-14}$ \\
17 & $3.14 \times 10^{-11}$ & 25 & $9.57 \times 10^{-15}$ \\
\bottomrule
\end{tabular}
\end{table}
Table~\ref{tab:errors_vs_M_example1} reports the relative error of the $\star$-method for different values of the truncation parameter $M$ in Example~1, with matrix size 2500, Arnoldi parameter $k=35$, and $t_{\max}=4$.
 While the bound of Theorem~\ref{thm:bound} predicts the exponential convergence of the error, it predicts $M=260$ to reach machine accuracy when applied to Example~1. A similar behavior also appears when tested on other examples. 

\begin{figure}[ht]
\centering

\begin{subfigure}[b]{0.48\textwidth}
    \centering
    \includegraphics[width=\linewidth]{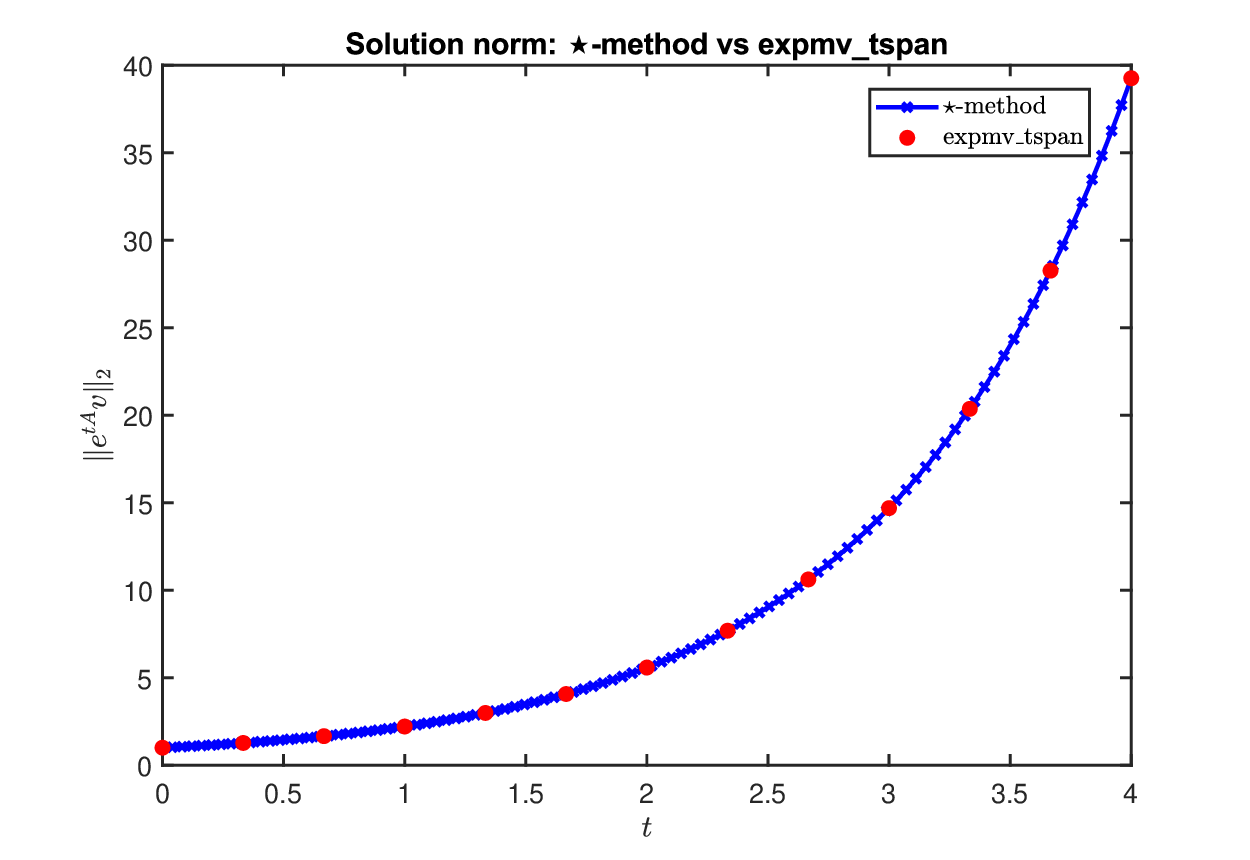}
    \caption{Example 4 ($n=20$, $M=12$, $k=19$)}
\end{subfigure}
\hfill
\begin{subfigure}[b]{0.48\textwidth}
    \centering
    \includegraphics[width=\linewidth]{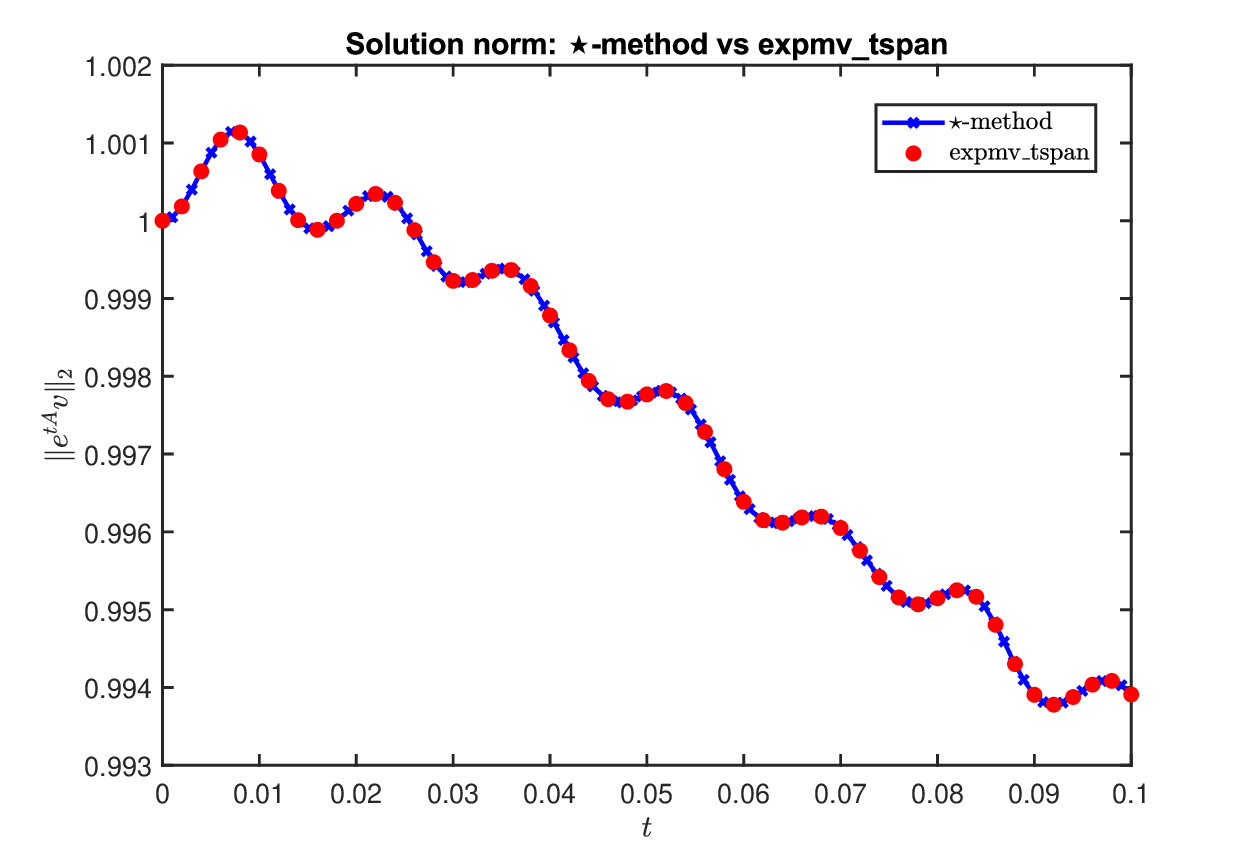}
    \caption{Example 8 ($n=841$, $M=50$, $k=70$)}
\end{subfigure}

\caption{Comparison of solution norms $\|e^{At}v\|$ for Examples~4 and~8
over their respective time intervals ($[0,4]$ and $[0,0.1]$).
The $\star$-method results are shown as curves and
\texttt{expmv\_tspan} values as discrete points overlaid on that curve.}

\label{fig:solution_norms}

\end{figure}
The solution obtained by the $\star$-approaches can be used to compute the exponential at every point of the given interval. Figure~\ref{fig:solution_norms} illustrates this by plotting the solution norm $\|e^{At}v\|$ as a blue curve generated by the proposed $\star$-method computed over 100 equispaced points on the interval, for Examples~4 and~8. Additionally, the corresponding solutions from \texttt{expmv\_tspan} are shown as discrete points (red circles) overlaid on that curve.

\newpage
\section{Conclusions}
\label{sec4}
In this paper, we presented a novel algorithm for computing the matrix exponential $e^{\tilde{A} t}v$ for all $t$ in a given interval, where $\tilde{A} $ is a matrix and $v$ a vector. The proposed method, referred to as the $\star$-method, represents the solution using a Legendre polynomial expansion of the function $e^{\tilde{A} t}v$, enabling efficient evaluation at any time point within the interval.
This approach builds upon a recent formulation for solving linear ODEs introduced in~\cite{giscard2015exact, pozza2023newk}, which is derived from the so-called $\star$-algebra. By this path, the matrix exponential approximation is given by solving a Stein matrix equation, which allows for the integration of Krylov subspace methods when handling large and sparse matrices.

Numerical experiments demonstrate that the $\star$-method, when combined with the Arnoldi algorithm, achieves accuracy and efficiency comparable to the method proposed in~\cite{al2011computing}, which approximates the matrix exponential on a set of a certain given number of time points. A key advantage of our method is its ability to produce the solution over the entire time interval, making it particularly suitable for applications where the desired evaluation times are too many or not known in advance.

 In Section~\ref{sec:trunc:err}, we have introduced a first upper bound for the truncation error of the $\star$-approach for the matrix exponential. Although the upper bound correctly predicts the exponential convergence of the error, its behavior in practical cases seems not yet satisfactory. Nevertheless, the approach described in this paper provides a much-needed starting point to develop sharper bounds by a better polynomial approximation.

Finally, we have reformulated the problem as the linear system \eqref{eq:linsyst}, which enables the development of new preconditioning techniques to evaluate the action of the matrix exponential on a vector. This is part of ongoing research.

\section*{Acknowledgments}
  We thank Fabio Durastante for his help with Examples 8--10 and the anonymous referee for their work and useful suggestions.
  This work was supported by Charles University Research programs No. PRIMUS/21/SCI/009 and UNCE/24/SCI/005, and by the Magica project ANR-20-CE29-0007 funded by the French National Research Agency. 

  \subsection*{Declaration of generative AI and AI-assisted technologies}
  \noindent   OpenAI’s ChatGPT (GPT‑5) and Microsoft 365 Copilot (bizchat.20260210.47.1) were used for language polishing and for assisting with bibliographic research, as well as with trivial mathematical derivations, all of which were strictly supervised and verified by the authors.


 \bibliography{cas-refs}
\bibliographystyle{acm}



\end{document}